\newtheorem{theorem}{Theorem}
\newtheorem{definition}[theorem]{Definition}
\newtheorem{lemma}[theorem]{Lemma}
\newtheorem{remark}[theorem]{Remark}
\newenvironment{proof}[1][Proof]{\noindent\textbf{#1.} }{\ \rule{0.5em}{0.5em}}
\numberwithin{equation}{section}
\begin{document}

\begin{center}

{\large {\bf
MIXED {$\boldsymbol{hp}$}-FINITE ELEMENT METHOD FOR LINEAR \\
ELASTICITY WITH WEAKLY IMPOSED SYMMETRY III: \\
STABILITY ANALYSIS IN 3D
\\} }

\vspace*{.25cm} 

Weifeng Qiu and Leszek Demkowicz

\vspace*{.25cm}

{\bf Institute for
Computational Engineering and Sciences\\
The University of Texas at Austin, Austin, TX 78712, USA }
\end{center}

\begin{abstract}
The paper presents a generalization of Arnold-Falk-Winther elements 
for three dimensional linear elasticity, to meshes 
with elements of variable order. The generalization is straightforward 
but the stability analysis involves a non-trivial modification of 
involved interpolation operators. The analysis addresses only the
$h$-convergence. 
\end{abstract}

{\bf Key words:} elasticity, mixed formulation, $hp$ elements

{\bf AMS subject classification:} 65N30, 65L12

\section{Introduction}
Linear elasticity is a classical subject, and it has been studied for 
a long time. The paper focuses on the so-called dual--mixed formulation
with weakly imposed symmetry that may be derived by considering stationary
points of the generalized Hellinger-Reissner functional \cite{ODENREDDY:1976:VMTM}.
We restrict ourselves to the static 
case only and, for the sake of simplicity, we assume that the body is 
fixed on the whole boundary. 
We look for stress tensor $\sigma\in
H(\text{div},\Omega;\mathbb{M})$, displacement vector $u\in L^{2}(\Omega;\mathbb{V})$, and 
infinitesimal rotation $p\in L^{2}(\Omega;\mathbb{K})
$ satisfying
\begin{gather}
\int_{\Omega}(A\sigma:\tau+\text{div}\tau\cdot u+\tau:p)
d\boldsymbol{x}=0,\text{ \ }\tau\in H(\text{div},\Omega;\mathbb{M})
,\label{weak_symm_formula_continuous}\\
\int_{\Omega}\text{div}\sigma\cdot vd\boldsymbol{x}=\int_{\Omega}f\cdot vd\boldsymbol{x},
\text{ \ }v\in L^{2}(\Omega;\mathbb{V})  ,\nonumber\\
\int_{\Omega}\sigma:qd\boldsymbol{x}=0,\text{\ \ }q\in L^{2}(\Omega;\mathbb{K}).\nonumber
\end{gather}
The first equation represents a relaxed form of the Hooke's law combined with 
Cauchy geometrical
relations, the second one represents the equilibrium equations (in a strong form),
and the third one enforces the symmetry of the stress tensor. 
We refer to the next section for a detailed 
description of energy spaces: $H(\text{div},\Omega;\mathbb{M})$, $L^{2}(\Omega;\mathbb{V})$ 
and $L^{2}(\Omega;\mathbb{K})$. The operator $A$ denotes the compliance tensor (operator)
mapping stress tensor into strain tensor. The operator is bounded, symmetric, 
uniformly positive definite, and it preserves the symmetry of the tensor.

The traditional motivation for studying the formulation~(\ref{weak_symm_formula_continuous}) 
comes from  handling nearly incompressible materials. Our interest in the subject
stems from a study of a class of (visco)elastic vibration problems for structures with 
large material contrast, see~\cite{QD:2009:MMEW} for a motivating example.

A number of authors have developed approximation schemes based on 
formulation~(\ref{weak_symm_formula_continuous}), among others see
\cite{AFW:2006:ECH,AFW:2007:MMEW,Falk:2008:FME,AT:1979:EFEE,ABD:1984:MFEPE,AF:1988:NMFEE,
FarhloulFortin:1997:DHES,FdV:1975:SFA,Morley:1989:MFEE,SteinRolfes:1990:SOMFEPE,
Stenberg:1986:COMME,Stenberg:1988:FMME,Stenberg:1988:TLOMME}. 
For a brief description of these methods, 
we refer to the introduction in \cite{AFW:2007:MMEW}. 
We also refer to the recent work of 
Cockburn, Gopalakrishnan 
and Guzman \cite{CGG:2009:NEEMW} who
have developed a new mixed method for linear elasticity using a hybridized 
version of~(\ref{weak_symm_formula_continuous}).

The work presented in this paper is based on the mixed finite element methods
developed by Arnold, Falk and Winther  in \cite{AFW:2006:ECH,Falk:2008:FME,AFW:2007:MMEW}. 

The ultimate goal of this work is to lay down theoretical foundations for,
and implement a fully automatic $hp$-adaptive Finite Element (FE) method based on
a generalization of the AFW element to meshes with variable order.
The generalization builds on the exact grad-curl-div sequence that holds for $hp$ meshes,
see \cite{Demkowicz:2006:HPAFE,Demkowicz:2007:HPAFE2} and it is rather
straightforward. The formulation is easily accommodated in a general $hp$ code
supporting the exact sequence.

The $h$ convergence analysis presented in \cite{AFW:2007:MMEW} for 
meshes with arbitrary but uniform polynomial order, does not however
generalize immediately to elements with variable order.

With the proof of $p$ and $hp$ convergence as an ultimate goal,
our initial efforts start with a less ambitious goal of proving first stability
and convergence for uniform $h$-refinements of meshes of variable order.

At the first glance, a generalization of the techniques from \cite{AFW:2007:MMEW}
seems to be easy.
But, as we have shown in \cite{QD:2009:MMEW}, the (natural generalization of) canonical 
projection operators defined in \cite{AFW:2006:ECH} do not commute with divergence 
operator on meshes with variable order, a property essential in the proof of discrete 
stability. We have resolved this problem by invoking the Projection Based (PB) interpolation 
operators in \cite{QD:2009:MMEW}. Unfortunately, the PB operators do not commute with 
an algebraic
operator $S_{n-2}$, introduced in \cite{AFW:2006:ECH}, another essential
construction in the AFW proof of discrete stability. We have resolved the problem by
designing a new, special operator $\tilde{W}_{h}$ in \cite{QD:2009:MMEW}
that satisfies the commutativity property, as needed.
Unfortunately, we managed to prove
well-definedness of operator $\tilde{W}_{h}$ only for polynomial orders 
$0 \leq p \leq 3$, and only for two space dimensions, see \cite{QD:2009:MMEW}.
In this contribution, we resolve
the problem by designing new PB operators and a a variant of $\tilde{W}_{h}$,
a new operator 
$\overline{\Pi}_{\tilde{r},h}^{1,-}$, 
discussed in the text.

An outline of the paper is the follows. Section 2 introduces notations.
In Section 3, we define the involved finite element spaces.
Section 4 recalls the mixed formulation of linear elasticity 
with weakly imposed symmetry and states the Brezzi conditions for the stability.
In Section 5, we establish all technical results needed for proving the stability. 
We construct the new 
PB operators and the operator $\overline{\Pi}_{\tilde{r},h}^{1,-}$. 
Finally, in Section 6, we prove the Brezzi conditions.

\section{Notations}
In this section, we introduce some basic notations.
We define $\mathbb{M}$ to be the space of $3\times 3$ real matrices, and 
$\mathbb{V}$ to be $\mathbb{R}^{3}$. For any $3\times 3$ real matrices 
$A,B$, we define 
$$
A:B=\text{tr}(AB^{\top}).
$$ 
We denote by $\mathbb{S}$ 
and $\mathbb{K}$ the subspaces of symmetric and anti-symmetric matrices 
in $\mathbb{R}^{3\times 3}$. Each anti-symmetric matrix can be identified 
with a vector in $\mathbb{V}$ given by the mapping $\text{vec}:\mathbb{K}
\rightarrow\mathbb{V}$:
\begin{equation*}
\text{vec}\left[\begin{array}{ccc}
0 & -v_{3} & v_{2}\\
v_{3} & 0 & -v_{1}\\
-v_{2} & v_{1} & 0
\end{array}\right]=
\left[\begin{array}{c}
v_{1}\\
v_{2}\\
v_{3}
\end{array}\right].
\end{equation*}
$\Omega$ is a domain in $\mathbb{R}^{3}$. For any vector space $\mathbf{X}$ with 
inner product, we denote by $L^{2}(\Omega;\mathbf{X})$
the space of square-integrable vector fields on $\Omega$ with values in $\mathbf{X}$.  
In the paper, $\mathbf{X}$ will be $\mathbb{R}$, $\mathbb{V}$, $\mathbb{M}$, or $\mathbb{K}$.
When $\mathbf{X}=\mathbb{R}$, we will write $L^{2}(\Omega)$. The norm associated 
with $L^{2}(\Omega; \mathbf{X})$, denoted by $\Vert\cdot\Vert_{L^{2}(\Omega;\mathbf{X})}$, 
is obtained by taking the
square root of the sum of (squared) $L^2$ norms of individual  components of the  
vector fields 
on $\Omega$. 

Notice that, for scalar-valued functions,
norm $\Vert\cdot\Vert_{L^{2}(\Omega)}$ coincides with 
the standard $L^{2}$-norm. The corresponding Sobolev space of order 
$m$, which is the subspace of $L^{2}(\Omega;\mathbf{X})$ consisting of functions with 
all partial derivatives of order less than or equal to $m$ in $L^{2}(\Omega;\mathbf{X})$, 
is denoted 
by $H^{m}(\Omega;\mathbf{X})$. The norm associated with $H^{m}(\Omega;\mathbf{X})$, 
denoted by $\Vert\cdot\Vert_{H^{m}(\Omega;\mathbf{X})}$, 
equals to the square root of the sum of (squared) $L^2$-norms  of all
partial derivatives with order less than or equal to $m$, for all components of 
vector fields  on $\Omega$. When $\mathbf{X}=\mathbb{R}$, $\Vert\cdot\Vert_{H^{m}(\Omega)}$ 
coincides with the standard $H^{m}$-norm for scalar-valued functions. 

The spaces $H(\text{curl},\Omega), H(\text{div},\Omega)$ are defined by 
\begin{align*}
& H(\text{curl},\Omega)=\{\boldsymbol{u}\in L^{2}(\Omega;\mathbb{V}):\text{curl}
\boldsymbol{u}\in L^{2}(\Omega;\mathbb{V})\}\\
& H(\text{div},\Omega)=\{\boldsymbol{v}\in L^{2}(\Omega;\mathbb{V}):\text{div}
\boldsymbol{v}\in L^{2}(\Omega)\}
\end{align*}
with the norms, 
\begin{align*}
& \Vert\boldsymbol{u}\Vert_{H(\text{curl},\Omega)}=(\Vert\boldsymbol{u}\Vert_{L^{2}
(\Omega;\mathbb{V})}^{2}+\Vert\text{curl}\boldsymbol{u}\Vert_{L^{2}(\Omega;\mathbb{V})}^{2})^{1/2}, 
\quad \boldsymbol{u}\in H(\text{curl},\Omega)\\
& \Vert\boldsymbol{v}\Vert_{H(\text{div},\Omega)}=(\Vert\boldsymbol{v}\Vert_{L^{2}
(\Omega;\mathbb{V})}^{2}+\Vert\text{div}\boldsymbol{v}\Vert_{L^{2}(\Omega)}^{2})^{1/2}, 
\quad \boldsymbol{v}\in H(\text{div},\Omega).
\end{align*}

We extend the definitions of $\nabla$ to $\mathbb{V}$-valued 
functions, $\text{curl}$ and $\text{div}$ to $\mathbb{M}$-valued 
functions by applying these operators row-wise.
The space $H(\text{curl},\Omega;\mathbb{M}),H(\text{div},\Omega;\mathbb{M})$ 
are defined by
\begin{align*}
& H(\text{curl},\Omega;\mathbb{M})=\{\sigma\in L^{2}(\Omega;\mathbb{M}):\text{curl}
\sigma\in L^{2}(\Omega;\mathbb{M})\} \\
& H(\text{div},\Omega;\mathbb{M})=\{\sigma\in L^{2}(\Omega;\mathbb{M}):\text{div}
\sigma\in L^{2}(\Omega;\mathbb{V})\}
\end{align*}
with the norms,
\begin{align*}
& \Vert\sigma\Vert_{H(\text{curl},\Omega;\mathbb{M})}=(\Vert\sigma\Vert_{L^{2}
(\Omega;\mathbb{M})}^{2}+\Vert\text{curl}\sigma\Vert_{L^{2}(\Omega;\mathbb{M})}^{2})^{1/2} \\
& \Vert\sigma\Vert_{H(\text{div},\Omega;\mathbb{M})}=(\Vert\sigma\Vert_{L^{2}
(\Omega;\mathbb{M})}^{2}+\Vert\text{div}\sigma\Vert_{L^{2}(\Omega;\mathbb{V})}^{2})^{1/2}.
\end{align*}

$\mathcal{P}_{r}(\Omega)$ denotes the space of polynomials on $\Omega$ with degree 
less than or equal to $r$. When $r$ is a negative integer, $\mathcal{P}_{r}(\Omega)=\{0\}$. 
$\mathcal{P}_{r}(\Omega;\mathbb{V})=[\mathcal{P}_{r}(\Omega)]^{3}$. 
Throughout this paper, we assume that $r$ is a nonnegative integer.

\section{Finite element spaces}

\subsection{Finite element spaces on a single tetrahedron}

Let $T$ be an arbitrary tetrahedron in $\mathbb{R}^{3}$. We denote by $\triangle_{k}(T)$, the 
union of $k$-dimensional subsimplexes of $T$. We denote by $\triangle (T)$, the 
union of all subsimplexes of $T$. 

For any $r\in\mathbb{Z}_{+}:=\{n\in\mathbb{Z}:n\geq 0\}$, we introduce
\begin{equation*}
\mathcal{P}_{r}\Lambda^{3}(T):= \mathcal{P}_{r}(T),
 \mathcal{P}_{r}\Lambda^{2}(T):=\mathcal{P}_{r}(T;\mathbb{V}),
\end{equation*}
\begin{equation*}
\mathring{\mathcal{P}}_{r}\Lambda^{2}(T):=\{\omega\in\mathcal{P}_{r}\Lambda^{2}(T)
:\forall F\in\triangle_{2}(T), \omega\cdot\boldsymbol{n}|_{F}=0\},
\end{equation*}
\begin{equation*}
\mathcal{P}_{r}^{-}\Lambda^{2}(T):=\mathcal{P}_{r-1}(T;\mathbb{V})
+\boldsymbol{x}\mathcal{P}_{r-1}(T),
\end{equation*}
\begin{equation*}
\mathring{\mathcal{P}}_{r}^{-}\Lambda^{2}(T):=\{\omega\in\mathcal{P}_{r}^{-}\Lambda^{2}(T)
:\forall F\in\triangle_{2}(T), \omega\cdot\boldsymbol{n}|_{F}=0\},
\end{equation*}
\begin{equation*}
\mathcal{P}_{r}^{-}\Lambda^{1}(T):=\mathcal{P}_{r-1}(T;\mathbb{V})
+\boldsymbol{x}\times\mathcal{P}_{r-1}(T;\mathbb{V}),
\end{equation*}
\begin{equation*}
\mathring{\mathcal{P}}_{r}^{-}\Lambda^{1}(T):=\{\omega\in\mathcal{P}_{r}^{-}\Lambda^{1}(T):
\forall F\in\triangle_{2}(T),\omega-(\omega\cdot\boldsymbol{n})\boldsymbol{n}|_{F}=0\}.
\end{equation*}
\begin{equation}
\mathring{\mathcal{P}}_{r}\Lambda^{0}(T):=\{u\in\mathcal{P}_{r}(T):u|_{\partial T}=0\}.
\label{FEM_spaces_uniform_order}
\end{equation}
Here $\boldsymbol{n}$ is a normal  unit  vector on $F$.
For $F$, an arbitrary face of $T$, we introduce 
\begin{align}
\mathcal{P}_{r}^{-}\Lambda^{1}(F):=\mathcal{P}_{r-1}(F;\mathbb{R}^{2})+\boldsymbol{y}
\mathcal{P}_{r-1}(F).
\label{FEM_space_face}
\end{align}
Here $\boldsymbol{y}$ denote any orthogonal coordinates on $F$.
In \cite{AFW:2006:ECH,Falk:2008:FME}, spaces in (\ref{FEM_spaces_uniform_order},\ref{FEM_space_face}) 
are defined in the language of exterior calculus. Here we rewrite them 
in the standard language of calculus. Please refer to \cite{AFW:2006:ECH} 
and \cite{Falk:2008:FME} for
a detailed correspondence.

We denote by $\tilde{r}$ a mapping from $\triangle (T)$ 
to $\mathbb{Z}_{+}$ such that if $e,f\in\Delta(T)$ and $e\subset f$ then
$\tilde{r}(e)  \leq\tilde{r}(f)$.
We introduce now formally the FE spaces of variable order.
\bigskip

\begin{definition}
\label{FEM_spaces_single_triangle_variable_order}
\[
\mathcal{P}_{\tilde{r}}\Lambda^{3}(T):=
\mathcal{P}_{\tilde{r}(T)}\Lambda^{3}(T)
=\mathcal{P}_{\tilde{r}(T)}(T),
\]
\[
\mathcal{P}_{\tilde{r}}\Lambda^{2}(T):=
\{\omega\in\mathcal{P}_{\tilde{r}(T)}\Lambda^{2}(T):
\forall F\in\triangle_{2}(T) \text{, } \omega\cdot \boldsymbol{n}|_{F}
\in\mathcal{P}_{\tilde{r}(F)}(F)\},
\]
\[
\mathcal{P}_{\tilde{r}}^{-}\Lambda^{2}(T):=
\{\omega\in\mathcal{P}_{\tilde{r}(T)}^{-}\Lambda^{2}(T):
\forall F\in\triangle_{2}(T) \text{, } \omega\cdot \boldsymbol{n}|_{F}
\in\mathcal{P}_{\tilde{r}(F)-1}(F)\},
\]
\[
\mathcal{P}_{\tilde{r}}^{-}\Lambda^{1}(T):=
\{\omega\in\mathcal{P}_{\tilde{r}(T)}^{-}\Lambda^{1}(T):
\forall F\in\triangle_{2}(T) \text{, } \omega-(\omega\cdot\boldsymbol{n})\boldsymbol{n}|_{F}
\in\mathcal{P}_{\tilde{r}(F)}^{-}\Lambda^{1}(F);
\]
$
\forall \boldsymbol{t}\in\triangle_{1}(T), 
\omega\cdot\boldsymbol{t}\in \mathcal{P}_{\tilde{r}(e)-1}(e)\}
$
where $\boldsymbol{t}$ is a tangential vector on $e$. 
\end{definition}

\begin{remark}
In the definition of $\mathcal{P}_{\tilde{r}}^{-}\Lambda^{1}(T)$, 
$\omega-(\omega\cdot\boldsymbol{n})\boldsymbol{n}|_{F}$ is a tangential vector field 
on $F$. So $\omega-(\omega\cdot\boldsymbol{n})\boldsymbol{n}|_{F}$ can be considered
as a two-component vector field.
\end{remark}

\bigskip

\begin{definition}
\label{FEM_spaces_single_triangle_variable_order_vector}
We define $\mathcal{P}_{\tilde{r}}\Lambda^{3}(T;\mathbb{V}):=\mathcal{P}_{\tilde{r}(T)}(T;\mathbb{V})$. 
We also define $\mathcal{P}_{\tilde{r}}\Lambda^{2}(T;\mathbb{V})$,$\mathcal{P}_{\tilde{r}}^{-}\Lambda^{2}
(T;\mathbb{V})$, and $\mathcal{P}_{\tilde{r}}^{-}\Lambda^{1}(T;\mathbb{V})$ as matrix-valued polynomial spaces
whose rows stay in $\mathcal{P}_{\tilde{r}}\Lambda^{2}(T)$, $\mathcal{P}_{\tilde{r}}^{-}\Lambda^{2}(T)$, 
and $\mathcal{P}_{\tilde{r}}^{-}\Lambda^{1}(T)$ respectively.
\end{definition}

\begin{remark}
Finite element spaces in definitions \ref{FEM_spaces_single_triangle_variable_order} 
and \ref{FEM_spaces_single_triangle_variable_order_vector} are the same as those 
introduced in 
\cite{QD:2009:MMEW} for $n=3$. In this paper, we just rewrite them in the language of 
standard calculus.
\end{remark}

\subsection{Finite element spaces on a bounded polyhedral domain}

Let $\mathcal{T}_{h}$ be a tetrahedral mesh. Here $h$ represents the biggest diameter of 
tetrahedrons in $\mathcal{T}_{h}$. We extend the map $\tilde{r}$ to a mapping from 
$\triangle (\mathcal{T}_{h})$ to $\mathbb{Z}_{+}$ such that if $e\subset f$, then 
$\tilde{r}(e)\leq \tilde{r}(f)$. For any $T\in\triangle_{3}(\mathcal{T}_{h})$, 
the restriction of $\tilde{r}$ to $\triangle (T)$ is represented with the same 
symbol $\tilde{r}$. 
We denote by $\triangle_{k}(\mathcal{T}_{h})$ the union of $k$-dimensional 
subsimplexes of $\mathcal{T}_{h}$, 
and by $\triangle (\mathcal{T}_{h})$ the union of all subsimplexes of $\mathcal{T}_{h}$. 

\bigskip
\begin{definition}
\label{FEM_spaces_mesh}
Let $\mathcal{T}_{h}$ be a tetrahedronal mesh. We define $\mathcal{P}_{\tilde{r}}\Lambda^{3}(\mathcal{T}_{h})$, 
$\mathcal{P}_{\tilde{r}}\Lambda^{2}(\mathcal{T}_{h})$, $\mathcal{P}_{\tilde{r}}^{-}\Lambda^{2}(\mathcal{T}_{h})$, 
$\mathcal{P}_{\tilde{r}}^{-}\Lambda^{1}(\mathcal{T}_{h})$, $\mathcal{P}_{\tilde{r}}\Lambda^{3}(\mathcal{T}_{h};\mathbb{V})$, 
$\mathcal{P}_{\tilde{r}}\Lambda^{2}(\mathcal{T}_{h};\mathbb{V})$,$\mathcal{P}_{\tilde{r}}^{-}\Lambda^{2}
(\mathcal{T}_{h};\mathbb{V})$, and $\mathcal{P}_{\tilde{r}}^{-}\Lambda^{1}(\mathcal{T}_{h};\mathbb{V})$ 
as spaces of piece-wisely smooth functions or vector fields on $\mathcal{T}_{h}$ whose restrictions on $T$ 
are $\mathcal{P}_{\tilde{r}}\Lambda^{3}(T)$, 
$\mathcal{P}_{\tilde{r}}\Lambda^{2}(T)$, $\mathcal{P}_{\tilde{r}}^{-}\Lambda^{2}(T)$, 
$\mathcal{P}_{\tilde{r}}^{-}\Lambda^{1}(T)$, $\mathcal{P}_{\tilde{r}}\Lambda^{3}(T;\mathbb{V})$, 
$\mathcal{P}_{\tilde{r}}\Lambda^{2}(T;\mathbb{V})$,$\mathcal{P}_{\tilde{r}}^{-}\Lambda^{2}
(T;\mathbb{V})$, and $\mathcal{P}_{\tilde{r}}^{-}\Lambda^{1}(T;\mathbb{V})$ respectively, 
for any $T\in\triangle_{3}(\mathcal{T}_{h})$. 
\end{definition}

\begin{remark}
Obviously, we have 
\begin{align*}
& \mathcal{P}_{\tilde{r}}\Lambda^{2}(\mathcal{T}_{h}),\mathcal{P}_{\tilde{r}}^{-}\Lambda^{2}(\mathcal{T}_{h})
\subset H(\text{div},\Omega),\quad
& \mathcal{P}_{\tilde{r}}^{-}\Lambda^{1}(\mathcal{T}_{h})\subset H(\text{curl},\Omega),\\
& \mathcal{P}_{\tilde{r}}\Lambda^{2}(\mathcal{T}_{h};\mathbb{V}), \mathcal{P}_{\tilde{r}}^{-}\Lambda^{2}
(\mathcal{T}_{h};\mathbb{V})\subset H(\text{div},\Omega;\mathbb{M}),\quad
& \mathcal{P}_{\tilde{r}}^{-}\Lambda^{1}(\mathcal{T}_{h};\mathbb{V})\subset H(\text{curl},\Omega;\mathbb{M}).
\end{align*}
Here $\Omega$ is an open subset in $\mathbb{R}^{3}$ with 
$\overline{\Omega}=\cup_{T\in\mathcal{T}_{h}}T$.
The spaces defined in~(\ref{FEM_spaces_mesh}) have been introduced 
in \cite{QD:2009:MMEW} using the language of differential forms.
\end{remark}

\section{Algebraic operators and some auxiliary properties}

In this section, we will introduce two algebraic operators, 
and prove some of their relevant properties.


\begin{definition}
\label{S2}
We introduce a linear map $S_{2}$ defined as follows,
\begin{equation*}
S_{2}U = (u_{23}-u_{32},u_{31}-u_{13},u_{12}-u_{21})^{\top}.
\end{equation*}
Here $U$ is an arbitrary matrix in $\mathbb{R}^{3\times 3}$.
\end{definition}

\begin{remark}
It is easy to check that $S_{2}U=\text{vec}(U^{\top}-U)$.
\end{remark}

\bigskip
\begin{definition}
\label{S1}
We define linear map $S_{1}$ as follows,
\begin{equation*}
S_{1}W = W^{\top}-\text{tr}(W)I.
\end{equation*}
Here $W$ is an arbitrary matrix in $\mathbb{R}^{3\times 3}$.
\end{definition}

\bigskip
\begin{lemma}
\label{S1_invertible}
Operator $S_{1}$ is invertible. And $S_{1}^{-1}W=W^{\top}-\dfrac{1}{2}\text{tr}(W)I$.
\end{lemma}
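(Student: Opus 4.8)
The statement to prove is that $S_1 W = W^\top - \operatorname{tr}(W) I$ is invertible on $\mathbb{R}^{3\times 3}$ with $S_1^{-1} W = W^\top - \tfrac12 \operatorname{tr}(W) I$. The plan is to verify this directly by composition, exploiting that $S_1$ is linear on a finite-dimensional space, so it suffices to exhibit a two-sided inverse (or, since the space is finite-dimensional, even a one-sided one). First I would record the two elementary facts that make the computation go through: $\operatorname{tr}(W^\top) = \operatorname{tr}(W)$, and $\operatorname{tr}(I) = 3$ in three dimensions.

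Next, define the candidate map $G W := W^\top - \tfrac12 \operatorname{tr}(W) I$ and compute $G(S_1 W)$. Writing $V = S_1 W = W^\top - \operatorname{tr}(W) I$, we have $\operatorname{tr}(V) = \operatorname{tr}(W^\top) - \operatorname{tr}(W)\operatorname{tr}(I) = \operatorname{tr}(W) - 3\operatorname{tr}(W) = -2\operatorname{tr}(W)$, and $V^\top = W - \operatorname{tr}(W) I$. Hence
\[
G(S_1 W) = V^\top - \tfrac12 \operatorname{tr}(V) I = \big(W - \operatorname{tr}(W) I\big) - \tfrac12(-2\operatorname{tr}(W)) I = W - \operatorname{tr}(W) I + \operatorname{tr}(W) I = W.
\]
So $G \circ S_1 = \mathrm{id}$. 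Since $S_1$ is a linear endomorphism of the finite-dimensional space $\mathbb{R}^{3\times 3}$, a left inverse is automatically a two-sided inverse, so $S_1$ is invertible and $S_1^{-1} = G$. For completeness one can also check $S_1 \circ G = \mathrm{id}$ by the symmetric computation: with $U = GW = W^\top - \tfrac12\operatorname{tr}(W)I$ one gets $\operatorname{tr}(U) = \operatorname{tr}(W) - \tfrac32\operatorname{tr}(W) = -\tfrac12\operatorname{tr}(W)$ and $U^\top = W - \tfrac12\operatorname{tr}(W) I$, so $S_1(GW) = U^\top - \operatorname{tr}(U) I = W - \tfrac12\operatorname{tr}(W)I + \tfrac12\operatorname{tr}(W) I = W$.

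There is essentially no obstacle here — the only thing to be careful about is the dimension-dependent constant $\operatorname{tr}(I) = 3$, which is what forces the coefficient $\tfrac12 = \tfrac{1}{n-1}$ with $n=3$; in general dimension $n$ the inverse would be $W^\top - \tfrac{1}{n-1}\operatorname{tr}(W) I$, and the formula degenerates when $n=1$. I would present the proof simply as the one-line verification $S_1^{-1}(S_1 W) = W$ above, optionally remarking that invertibility of a linear self-map on a finite-dimensional space follows from injectivity, which is immediate once the left inverse is displayed.
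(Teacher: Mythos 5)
Your verification is correct and is precisely the "straightforward" computation the paper alludes to without writing out (the paper simply states that the proof of this lemma is straightforward and omits it). Both compositions check out, and your remark that the coefficient $\tfrac12$ comes from $\operatorname{tr}(I)=3$ is a nice touch, though not needed for the three-dimensional setting of the paper.
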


\begin{lemma}
\label{S1_S2_commuting_diagram}
$\text{div}S_{1}W + S_{2}\text{curl}W=0,\forall W\in H^{1}(\Omega,\mathbb{M}).$ 
Here, $\Omega$ is any open subset in $\mathbb{R}^{3}$.
\end{lemma}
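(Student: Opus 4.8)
The plan is to verify the identity componentwise by direct computation, exploiting the fact that both $\operatorname{div}S_1W$ and $S_2\operatorname{curl}W$ are vector fields in $\mathbb{R}^3$, so it suffices to check that each of the three components of $\operatorname{div}S_1W + S_2\operatorname{curl}W$ vanishes. First I would write out $S_1W = W^\top - \operatorname{tr}(W)I$ explicitly as a matrix with entries $(S_1W)_{ij} = W_{ji} - \delta_{ij}\sum_k W_{kk}$. Recalling that $\operatorname{div}$ acts row-wise on matrix fields, the $i$-th component of $\operatorname{div}S_1W$ is $\sum_j \partial_j (S_1W)_{ij} = \sum_j \partial_j W_{ji} - \partial_i \sum_k W_{kk}$.

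Next I would compute $\operatorname{curl}W$, again row-wise: the $i$-th row of $\operatorname{curl}W$ is the curl of the vector $(W_{i1},W_{i2},W_{i3})$, so $(\operatorname{curl}W)_{i1} = \partial_2 W_{i3} - \partial_3 W_{i2}$, and cyclically for the other columns. Then apply $S_2$, whose first component is $(\operatorname{curl}W)_{23} - (\operatorname{curl}W)_{32}$, second is $(\operatorname{curl}W)_{31} - (\operatorname{curl}W)_{13}$, third is $(\operatorname{curl}W)_{12} - (\operatorname{curl}W)_{21}$. Substituting the curl entries, the first component of $S_2\operatorname{curl}W$ becomes $(\partial_1 W_{22} - \partial_2 W_{21}) - (\partial_3 W_{32} - \partial_2 W_{33})$, and similarly for the others.

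I would then add the two expressions component by component. For the first component, $\operatorname{div}S_1W$ contributes $\partial_1 W_{11} + \partial_2 W_{21} + \partial_3 W_{31} - \partial_1(W_{11}+W_{22}+W_{33})$, which simplifies to $\partial_2 W_{21} + \partial_3 W_{31} - \partial_1 W_{22} - \partial_1 W_{33}$; adding the first component of $S_2\operatorname{curl}W$ computed above cancels every term, giving zero. The remaining two components follow by the cyclic symmetry of the construction, so no genuinely new computation is needed — I would simply note the cyclic structure. The $H^1$ regularity hypothesis is used only to guarantee that all first-order partial derivatives exist in $L^2(\Omega)$ and that the mixed partials appearing implicitly (none survive here, but the cancellation uses equality of terms like $\partial_1 W_{22}$ appearing with opposite signs) are well-defined; since only first derivatives of $W$ enter, $W \in H^1(\Omega;\mathbb{M})$ is exactly the natural hypothesis and the identity holds in $L^2(\Omega;\mathbb{V})$.

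There is no real obstacle here — the only mild care needed is bookkeeping with the index conventions: getting the transpose in $S_1$ right (so that row-wise $\operatorname{div}$ of $W^\top$ produces $\sum_j \partial_j W_{ji}$ rather than $\sum_j \partial_j W_{ij}$), and matching the sign conventions in the definition of $S_2$ with the column ordering of $\operatorname{curl}W$. Once the conventions are fixed, the cancellation is automatic. If one prefers a coordinate-free argument, one could instead recognize this as the statement that the composition in the elasticity complex $H^1(\Omega;\mathbb{M}) \xrightarrow{\operatorname{curl}} \cdots$ interacts with the algebraic maps $S_1, S_2$ so as to make a certain diagram commute, but the componentwise check is shortest and completely elementary.
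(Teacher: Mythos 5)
Your approach is exactly what the paper has in mind: the authors give no proof at all, remarking only that Lemmas \ref{S1_invertible} and \ref{S1_S2_commuting_diagram} are ``straightforward,'' and the direct row-wise componentwise verification you describe is that straightforward computation. One bookkeeping slip to fix before relying on the written formulas: with $(\operatorname{curl}W)_{i2}=\partial_3 W_{i1}-\partial_1 W_{i3}$ one has $(\operatorname{curl}W)_{32}=\partial_3 W_{31}-\partial_1 W_{33}$, not $\partial_3 W_{32}-\partial_2 W_{33}$ as you wrote; with your intermediate expression the first component would \emph{not} cancel against $\partial_2 W_{21}+\partial_3 W_{31}-\partial_1 W_{22}-\partial_1 W_{33}$, whereas with the corrected entry the cancellation is exact, term by term. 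The rest of your argument, including the cyclic-symmetry remark and the observation that only first derivatives appear so $H^1$ regularity suffices and the identity holds in $L^2(\Omega;\mathbb{V})$, is correct.
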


Proofs of Lemma~\ref{S1_invertible} and Lemma~\ref{S1_S2_commuting_diagram} 
are straightforward.

\bigskip
\begin{lemma}
\label{tangential_normal_trace}
Let $T$ be a tetrahedron in $\mathbb{R}^{3}$, and $W\in H^{1}(T,\mathbb{M})$. Let 
$F$ be any face of $T$. If $W\cdot\boldsymbol{t}|_{F}=0$ for all tangential vectors
$\boldsymbol{t}$ on $F$, then $S_{1}W\cdot\boldsymbol{n}|_{F}=0$ where 
$\boldsymbol{n}$ is an unit normal vector on $F$.
\end{lemma}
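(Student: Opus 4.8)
The plan is to reduce the matrix statement to a computation about the rows of $W$ together with the algebraic definition of $S_{1}$. Recall that $S_{1}W = W^{\top} - \text{tr}(W)I$, so the $i$-th row of $S_{1}W$ is the $i$-th \emph{column} of $W$ minus $\text{tr}(W)$ times the $i$-th unit vector $\boldsymbol{e}_{i}$. Hence, writing $\boldsymbol{w}_{j}$ for the $j$-th row of $W$, the $(i,j)$ entry of $S_{1}W$ is $W_{ji} - \text{tr}(W)\delta_{ij}$, and the dot product of the $i$-th row of $S_{1}W$ with the unit normal $\boldsymbol{n} = (n_{1},n_{2},n_{3})^{\top}$ is $\sum_{j} (W_{ji} - \text{tr}(W)\delta_{ij}) n_{j} = (\text{div applied columnwise... no)}$ — more precisely it equals $(W^{\top}\boldsymbol{n})_{i} - \text{tr}(W) n_{i}$. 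So $(S_{1}W)\cdot \boldsymbol{n} = W^{\top}\boldsymbol{n} - \text{tr}(W)\boldsymbol{n}$ as a vector identity on $F$, and the claim becomes: if $W\cdot\boldsymbol{t}|_{F}=0$ for every tangential $\boldsymbol{t}$, then $W^{\top}\boldsymbol{n} = \text{tr}(W)\boldsymbol{n}$ on $F$.

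Next I would unpack the hypothesis. Saying $W\cdot\boldsymbol{t}|_{F}=0$ for all tangential vectors $\boldsymbol{t}$ on $F$ means $W\boldsymbol{t} = 0$ for every $\boldsymbol{t}\perp\boldsymbol{n}$; equivalently, each row $\boldsymbol{w}_{j}$ of $W$ is orthogonal to the tangent plane, i.e. $\boldsymbol{w}_{j} = \alpha_{j}\boldsymbol{n}$ for some scalar $\alpha_{j}$ (pointwise on $F$). Thus on $F$ we have $W = \boldsymbol{\alpha}\,\boldsymbol{n}^{\top}$ where $\boldsymbol{\alpha} = (\alpha_{1},\alpha_{2},\alpha_{3})^{\top}$. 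Then $\text{tr}(W) = \boldsymbol{n}^{\top}\boldsymbol{\alpha} = \boldsymbol{\alpha}\cdot\boldsymbol{n}$, and $W^{\top}\boldsymbol{n} = \boldsymbol{n}\,\boldsymbol{\alpha}^{\top}\boldsymbol{n} = (\boldsymbol{\alpha}\cdot\boldsymbol{n})\boldsymbol{n} = \text{tr}(W)\boldsymbol{n}$, using $|\boldsymbol{n}|=1$. This is exactly the identity needed, so $(S_{1}W)\cdot\boldsymbol{n}|_{F} = W^{\top}\boldsymbol{n} - \text{tr}(W)\boldsymbol{n} = 0$.

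I would present this in two short steps: first record the vector identity $(S_{1}W)\cdot\boldsymbol{n} = W^{\top}\boldsymbol{n} - \text{tr}(W)\boldsymbol{n}$ directly from Definition~\ref{S1} (this is purely algebraic and holds entry-by-entry, no smoothness needed), then substitute the rank-one representation $W|_{F} = \boldsymbol{\alpha}\boldsymbol{n}^{\top}$ that follows from the tangential-trace hypothesis and conclude. The $H^{1}$ regularity of $W$ plays no essential role beyond making the traces on $F$ well defined; everything is pointwise on $F$.

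There is no real obstacle here — the only thing to be careful about is the bookkeeping between rows and columns (the transpose in $S_{1}$), and the fact that "$W\cdot\boldsymbol{t}$" denotes the vector field whose components are the row-wise dot products $\boldsymbol{w}_{j}\cdot\boldsymbol{t}$, so that $W\cdot\boldsymbol{t}=0$ for all tangential $\boldsymbol{t}$ is genuinely equivalent to each row being normal. Once that is pinned down, the proof is a two-line computation, which is presumably why the authors may state it with a similarly terse argument.
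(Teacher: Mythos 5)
Your proof is correct and follows essentially the same route as the paper's: both are pointwise algebraic verifications on $F$ that exploit the vanishing of $W$ applied to tangent vectors. The paper checks each component of $S_{1}W\cdot\boldsymbol{n}$ separately using the particular tangential vectors $(n_{2},-n_{1},0)^{\top}$ and $(n_{3},0,-n_{1})^{\top}$, whereas you first package the hypothesis as the rank-one identity $W|_{F}=\boldsymbol{\alpha}\,\boldsymbol{n}^{\top}$ and then conclude in one line --- a cleaner organization of the same computation (and note that the normalization $|\boldsymbol{n}|=1$ is not actually needed in your final step).
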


\begin{proof}
According to definition \ref{S1}, we have
\begin{align*}
S_{1}W\cdot\boldsymbol{n} & =
\left[ 
\begin{array}{ccc}
-w_{22}-w_{33} & w_{21} & w_{31} \\
w_{12} & -w_{11}-w_{33} & w_{32} \\
w_{13} & w_{23} & -w_{11}-w_{22}
\end{array}
\right]\cdot\boldsymbol{n} \\
& = [n_{1}(-w_{22}-w_{33})+n_{2}w_{21}+n_{3}w_{31},
n_{1}w_{12}+n_{2}(-w_{11}-w_{33})+n_{3}w_{32}, \\
&\qquad n_{1}w_{13}+n_{2}w_{23}+n_{3}(-w_{11}-w_{22})]^{\top}.
\end{align*}

In the following, we will show that $n_{1}(-w_{22}-w_{33})+n_{2}w_{21}+n_{3}w_{31}=0$ on $F$. 
The proof of the other two components of $S_{1}W\cdot\boldsymbol{n}$ being zero on $F$
is
similar.

Obviously, $(n_{2},-n_{1},0)^{\top}\cdot\boldsymbol{n}=0$ and $(n_{3},0,-n_{1})^{\top}\cdot
\boldsymbol{n}=0$. This implies that $W\cdot(n_{2},-n_{1},0)^{\top}=0$ and 
$W\cdot(n_{3},0,-n_{1})^{\top}=0$ on $F$. So we have $-n_{1}w_{22}+n_{2}w_{21}=0$ 
and $-n_{1}w_{33}+n_{3}w_{31}=0$ on $F$. This shows that $n_{1}(-w_{22}-w_{33})+n_{2}w_{21}
+n_{3}w_{31}=0$ on $F$. 
\end{proof}

\bigskip
\begin{lemma}
\label{S1_by_part}
For any $W,Q\in\mathbb{M}$, we have $S_{1}W:Q=W:S_{1}Q$.
\end{lemma}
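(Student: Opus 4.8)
The plan is to verify the identity $S_1 W : Q = W : S_1 Q$ by direct computation, exploiting the structure $S_1 W = W^\top - \mathrm{tr}(W) I$ from Definition~\ref{S1}. First I would expand the left-hand side using bilinearity of the $\mathbb{M}$-inner product:
\[
S_1 W : Q = (W^\top - \mathrm{tr}(W) I) : Q = W^\top : Q - \mathrm{tr}(W)\,(I : Q).
\]
Here I would use two elementary facts about the product $A : B = \mathrm{tr}(AB^\top)$ introduced in Section~2: namely that $W^\top : Q = W : Q^\top$ (since $\mathrm{tr}(W^\top Q^\top) = \mathrm{tr}((QW)^\top) = \mathrm{tr}(QW) = \mathrm{tr}(WQ)$), and that $I : Q = \mathrm{tr}(Q^\top) = \mathrm{tr}(Q)$. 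Thus
\[
S_1 W : Q = W : Q^\top - \mathrm{tr}(W)\,\mathrm{tr}(Q).
\]

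Next I would expand the right-hand side in exactly the same way:
\[
W : S_1 Q = W : (Q^\top - \mathrm{tr}(Q) I) = W : Q^\top - \mathrm{tr}(Q)\,(W : I) = W : Q^\top - \mathrm{tr}(Q)\,\mathrm{tr}(W),
\]
using $W : I = \mathrm{tr}(W I^\top) = \mathrm{tr}(W) = \mathrm{tr}(W)$. Comparing the two expressions, both equal $W : Q^\top - \mathrm{tr}(W)\,\mathrm{tr}(Q)$, so the identity follows. In other words, $S_1$ is self-adjoint with respect to the Frobenius inner product on $\mathbb{M}$.

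There is really no serious obstacle here; the statement is an algebraic triviality once one records that transposition is self-adjoint for $A:B$ and that pairing against $I$ extracts the trace. If one wanted to avoid even these small lemmas, an alternative is to check the identity entrywise: writing out $S_1 W$ componentwise (as was already done in the proof of Lemma~\ref{tangential_normal_trace}) and summing against the entries of $Q$ gives a sum that is manifestly symmetric under swapping $W \leftrightarrow Q$. Either route is a one-line verification, so I would present the coordinate-free argument above for brevity.
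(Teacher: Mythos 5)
Your proof is correct, but it takes a different route from the paper. The paper verifies the identity by writing out $S_{1}W$ entrywise (the same $3\times 3$ display used in Lemma~\ref{tangential_normal_trace}), expanding $S_{1}W:Q$ as a sum of nine products, regrouping, and recognizing the result as $W:S_{1}Q$ --- a purely componentwise check. You instead argue coordinate-free from the defining formula $S_{1}W=W^{\top}-\mathrm{tr}(W)I$, reducing everything to the two elementary facts that transposition is self-adjoint for the pairing $A:B=\mathrm{tr}(AB^{\top})$ and that pairing against $I$ extracts the trace; both sides then collapse to $W:Q^{\top}-\mathrm{tr}(W)\,\mathrm{tr}(Q)$. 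Your intermediate identities are all verified correctly. The coordinate-free version is shorter, makes the self-adjointness of $S_{1}$ transparent, and would generalize immediately to $n\times n$ matrices, whereas the paper's entrywise computation is tied to $n=3$ but requires no auxiliary observations about the Frobenius product. Your closing remark that the entrywise sum is manifestly symmetric under $W\leftrightarrow Q$ is essentially the paper's argument, so you have in effect recorded both proofs.
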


\begin{proof}
\begin{align*}
S_{1}W:Q = &
\left[ 
\begin{array}{ccc}
-w_{22}-w_{33} & w_{21} & w_{31} \\
w_{12} & -w_{11}-w_{33} & w_{32} \\
w_{13} & w_{23} & -w_{11}-w_{22}
\end{array}
\right]:Q \\
= & (-w_{22}-w_{33})q_{11}+w_{21}q_{12}+w_{31}q_{13} \\
  & +w_{12}q_{21}+(-w_{11}-w_{33})q_{22}+w_{32}q_{23} \\
  & +w_{13}q_{31}+w_{23}q_{32}+(-w_{11}-w_{22})q_{33} \\
= & w_{11}(-q_{22}-q_{33})+w_{12}q_{21}+w_{13}q_{31} \\
  & +w_{21}q_{12}+w_{22}(-q_{11}-q_{33})+w_{23}q_{32} \\
  & +w_{31}q_{13}+w_{32}q_{23}+w_{33}(-q_{11}-q_{22}) \\  
= & W:\left[ 
\begin{array}{ccc}
-q_{22}-q_{33} & q_{21} & q_{31} \\
q_{12} & -q_{11}-q_{33} & q_{32} \\
q_{13} & q_{23} & -q_{11}-q_{22}
\end{array}
\right] = W:S_{1}Q.  
\end{align*}
\end{proof}

\begin{lemma}
\label{S1_unisolvent}
Let $T$ be a tetrahedron in $\mathbb{R}^{3}$. We take $W\in\mathring{\mathcal{P}}_{r+2}^{-}
\Lambda^{1}(T;\mathbb{V})$. If 
\begin{equation}
\int_{T}S_{1}W:Q = 0,\quad Q\in \mathcal{P}_{r-1}(T;\mathbb{M}),
\end{equation}
then $W=0$ on $T$.
\end{lemma}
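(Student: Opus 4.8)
The idea is to peel off the algebraic operator $S_{1}$ and then to invoke the classical unisolvence of the first--kind N\'ed\'elec (edge) element of degree $r+2$. First I would apply Lemma~\ref{S1_by_part}, integrated over $T$, to rewrite the hypothesis as $\int_{T}W:S_{1}Q=0$ for all $Q\in\mathcal{P}_{r-1}(T;\mathbb{M})$. By Lemma~\ref{S1_invertible}, $S_{1}:\mathbb{M}\to\mathbb{M}$ is a linear bijection, and since $S_{1}$ and $S_{1}^{-1}$ act entrywise-linearly they both map $\mathcal{P}_{r-1}(T;\mathbb{M})$ into itself; hence $Q\mapsto S_{1}Q$ carries $\mathcal{P}_{r-1}(T;\mathbb{M})$ bijectively onto itself, and the hypothesis is equivalent to
\[
\int_{T}W:Q=0,\qquad Q\in\mathcal{P}_{r-1}(T;\mathbb{M}).
\]
Choosing $Q$ with a single nonzero row equal to an arbitrary $q\in\mathcal{P}_{r-1}(T;\mathbb{V})$, this says that each row $w$ of $W$, which by definition lies in $\mathring{\mathcal{P}}_{r+2}^{-}\Lambda^{1}(T)$, satisfies $\int_{T}w\cdot q=0$ for all $q\in\mathcal{P}_{r-1}(T;\mathbb{V})$. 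It therefore suffices to show that such a $w$ vanishes.

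For this I would use the standard degrees of freedom of the space $\mathcal{P}_{r+2}^{-}\Lambda^{1}(T)$ (see \cite{AFW:2006:ECH,Falk:2008:FME}): the edge moments of $w\cdot\boldsymbol{t}$, the face moments of the tangential trace $w-(w\cdot\boldsymbol{n})\boldsymbol{n}$, and the interior moments $\int_{T}w\cdot q$ against $q\in\mathcal{P}_{r-1}(T;\mathbb{V})$ together form a unisolvent set. Since $w\in\mathring{\mathcal{P}}_{r+2}^{-}\Lambda^{1}(T)$ has vanishing tangential trace on every $F\in\triangle_{2}(T)$, it has $w\cdot\boldsymbol{t}=0$ on every $e\in\triangle_{1}(T)$ (an edge direction being in-plane for any adjacent face) and vanishing tangential trace on every face, so all edge and face degrees of freedom are zero. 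The assumption $\int_{T}w\cdot q=0$, $q\in\mathcal{P}_{r-1}(T;\mathbb{V})$, says the interior degrees of freedom vanish as well. Unisolvence then forces $w=0$, hence $W=0$.

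The reduction through $S_{1}$ is routine; the substantive input is the unisolvence of the first--kind N\'ed\'elec element, which I would quote rather than reprove. The one place where care is needed is the bookkeeping of polynomial degrees: for $\mathcal{P}_{r+2}^{-}\Lambda^{1}(T)$ the \emph{interior} degrees of freedom are exactly the moments against $\mathcal{P}_{r-1}(T;\mathbb{V})$, which is why the lemma couples the index $r+2$ with a test space of degree $r-1$. A dimension count confirms the match, $\dim\mathring{\mathcal{P}}_{r+2}^{-}\Lambda^{1}(T)=\tfrac{1}{2}r(r+1)(r+2)=\dim\mathcal{P}_{r-1}(T;\mathbb{V})$, and shows moreover that the injectivity asserted here is equivalent to surjectivity of the associated interior moment map; a self-contained proof would essentially have to re-establish that surjectivity, which is the bulk of the work.
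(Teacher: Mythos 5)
Your argument is correct and is essentially the paper's own proof: both use Lemma~\ref{S1_by_part} to move $S_{1}$ onto the test function, then the invertibility of $S_{1}$ (Lemma~\ref{S1_invertible}) to see that $Q\mapsto S_{1}Q$ is a bijection of $\mathcal{P}_{r-1}(T;\mathbb{M})$, and finally the unisolvence of the interior moments on $\mathring{\mathcal{P}}_{r+2}^{-}\Lambda^{1}$ (the paper cites this directly as Lemma~4.11 of \cite{AFW:2006:ECH}, which you unpack row-wise via the N\'ed\'elec degrees of freedom). Your added dimension count and the explicit reduction to rows are consistent with, but not needed beyond, what the cited lemma already provides.
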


\begin{proof}
According to Lemma \ref{S1_by_part}, we have 
\begin{equation*}
\int_{T}W:S_{1}Q = 0,\quad Q\in \mathcal{P}_{r-1}(T;\mathbb{M}).
\end{equation*}

By the definition of $S_{1}$, it is easy to see that $S_{1}\mathcal{P}_{r-1}(T;\mathbb{M})
\subset\mathcal{P}_{r-1}(T;\mathbb{M})$. According to Lemma \ref{S1_invertible}, we conclude
that $S_{1}\mathcal{P}_{r-1}(T;\mathbb{M})=\mathcal{P}_{r-1}(T;\mathbb{M})$.
According to Lemma $4.11$ in \cite{AFW:2006:ECH}, we have that $W=0$ on $T$.
\end{proof}

\section{Mixed formulation for  the elasticity equations with weakly imposed
symmetry}

We begin by rewriting formulation~(\ref{weak_symm_formula_continuous}) using 
operator $S_{2}$. 
The elasticity problem becomes: Find $(\sigma,u,p)  \in
H(\text{div},\Omega;\mathbb{M}) \times L^{2}(\Omega;\mathbb{V})  
\times L^{2}(\Omega;\mathbb{V})$ such that
\begin{align}
\left\langle A\sigma,\tau\right\rangle +\left\langle \text{div}\tau,u\right\rangle
-\left\langle S_{2}\tau,p\right\rangle  &  =0,\text{ \ }\tau\in H(\text{div},\Omega;\mathbb{M})
,\label{weak_symmetry_S2}\\
\left\langle \text{div}\sigma,v\right\rangle  &  =\left\langle f,v\right\rangle ,\text{
\ }v\in L^{2}(\Omega;\mathbb{V})  ,\nonumber\\
\left\langle S_{2}\sigma,q\right\rangle  &  =0,\text{ \ }q\in L^{2}(\Omega;\mathbb{V}).\nonumber
\end{align}
Here $\left\langle \cdot,\cdot\right\rangle$ is the standard $L^{2}$ inner product on $\Omega$. 
This problem is well-posed in the sense that, for each $f\in L^{2}(\Omega;\mathbb{V})$, 
there exists a unique solution
$(\sigma,u,p)  \in H(\text{div},\Omega;\mathbb{M})  \times L^{2}(\Omega;\mathbb{V})  \times
L^{2}(\Omega;\mathbb{V})$, and the solution operator is a bounded operator
\[
L^{2}(\Omega;\mathbb{V})\longrightarrow
H(\text{div},\Omega;\mathbb{M})\times L^{2}(\Omega;\mathbb{V})\times L^{2}(\Omega;\mathbb{V}).
\]
See \cite{AFW:2006:ECH} and \cite{Falk:2008:FME} for the proof.

Next, we consider a finite element discretization of
(\ref{weak_symmetry_S2}). For this, we choose families of
finite-dimensional subspaces
\[
\Lambda_{h}^{2}(\mathbb{M})  \subset H(\text{div},\Omega;\mathbb{M}), 
\Lambda_{h}^{3}(\mathbb{V})  \subset
L^{2}(\Omega;\mathbb{V})  ,\overline{\Lambda}_{h}^{3}(\mathbb{V})
\subset L^{2}(\Omega;\mathbb{V}),
\]
indexed by $h$, and seek the discrete solution $(\sigma_{h},u_{h},p_{h})
\in\Lambda_{h}^{2}(\mathbb{M})\times\Lambda_{h}^{3}(\mathbb{V})\times
\overline{\Lambda}_{h}^{3}(\mathbb{V})$ such that
\begin{align}
\left\langle A\sigma_{h},\tau\right\rangle +\left\langle \text{div}\tau,u_{h}
\right\rangle -\left\langle S_{2}\tau,p_{h}\right\rangle  &  =0,\text{ \ }\tau
\in\Lambda_{h}^{2}(\mathbb{M})
,\label{weak_symmetry_discrete}\\
\left\langle \text{div}\sigma_{h},v\right\rangle  &  =\left\langle f,v\right\rangle
,\text{ \ }v\in\Lambda_{h}^{3}(\mathbb{V})  ,\nonumber\\
\left\langle S_{2}\sigma_{h},q\right\rangle & =0,\text{ \ }q\in
\overline{\Lambda}_{h}^{3}(\mathbb{V})  .\nonumber
\end{align}
The stability of (\ref{weak_symmetry_discrete}) will be
ensured by the Brezzi stability conditions:
\begin{gather}
\text{(S1) }\Vert\tau\Vert_{H(\text{div},\Omega;\mathbb{M})}^{2}\leq c_{1}
\langle A\tau,\tau\rangle  \text{ whenever }\tau\in\Lambda_{h}^{2}
(\mathbb{M})  \text{ satisfies }\langle\text{div}\tau,v\rangle
=0\label{S1_condition}\\
\forall v\in\Lambda_{h}^{3}(\mathbb{V})  \text{ and }\langle S_{2}\tau,q\rangle =0
\text{ }\forall q\in\overline{\Lambda}_{h}^{3}(\mathbb{V}),\nonumber
\end{gather}
\begin{gather}
\text{(S2) for all nonzero }(v,q)  \in\Lambda_{h}^{3}
(\mathbb{V})\times\overline{\Lambda}_{h}^{3}(\mathbb{V})  \text{,
there exists nonzero}\label{S2_condition}\\
\tau\in\Lambda_{h}^{2}(\mathbb{M})  \text{ with }\langle
\text{div}\tau,v\rangle -\langle S_{2}\tau,q\rangle \geq 
c_{2}\Vert\tau\Vert_{H(\text{div},\Omega;\mathbb{M})}
(\Vert v\Vert_{L^{2}(\Omega;\mathbb{V})} +\Vert q\Vert_{L^{2}(\Omega;\mathbb{V})} )  ,\nonumber
\end{gather}
where constants $c_{1}$ and $c_{2}$ are independent of $h$.

For meshes of arbitrary but uniform order, conditions (\ref{S1_condition})
and (\ref{S2_condition}) have been proved in \cite{AFW:2006:ECH} and
\cite{Falk:2008:FME}. In what follows, we will demonstrate that they are also
satisfied for meshes with elements of variable (but limited) order.
In this paper, we define $\Lambda_{h}^{2}(\mathbb{M})=\mathcal{P}_{\tilde{r}+1}\Lambda^{2}
(\mathcal{T}_{h};\mathbb{V})$, and $\Lambda_{h}^{3}(\mathbb{V})=\overline{\Lambda}_{h}^{3}(\mathbb{V})=\mathcal{P}_{\tilde{r}}\Lambda^{3}(\mathcal{T}_{h};\mathbb{V})$.
We assume that there is $r_{\max}\in\mathbb{N}$ such that for 
any $h>0$ and $f\in\Delta (\mathcal{T}_{h})$,$\tilde{r}(f)\leq r_{\max}$.

\section{Preliminaries for the proof of stability}

From now on, we assume that $\Omega$ is a bounded polyhedral
domain in $\mathbb{R}^{3}$. We also use the standard assumptions for shape
regular meshes, which means that the ratio between outer diameter 
and inner diameter of any tetrahedron in any mesh has an uniform upper bound.

In the proof of stability, the following three commuting diagrams are essential.
\begin{equation}
\begin{array}{ccc}
H^{1}(\Omega;\mathbb{M}) & \overset{\text{div}}{\longrightarrow} & L^{2}(\Omega;\mathbb{V}) \\
\Pi_{\tilde{r},h}^{2}\downarrow & & \Pi_{\tilde{r},h}^{3}\downarrow \\
\mathcal{P}_{\tilde{r}+1}\Lambda^{2}(\mathcal{T}_{h};\mathbb{V}) & 
\overset{\text{div}}{\longrightarrow} & \mathcal{P}_{\tilde{r}}\Lambda^{3}(\mathcal{T}_{h};\mathbb{V}) 
\end{array}
\label{commuting_diagram1}
\end{equation}

\begin{equation}
\begin{array}{ccc}
H^{1}(\Omega;\mathbb{M}) & \overset{\text{div}}{\longrightarrow} & L^{2}(\Omega;\mathbb{V}) \\
\Pi_{\tilde{r},h}^{2,-}\downarrow & & \Pi_{\tilde{r},h}^{3}\downarrow \\
\mathcal{P}_{\tilde{r}+1}^{-}\Lambda^{2}(\mathcal{T}_{h};\mathbb{V}) & 
\overset{\Pi_{\tilde{r},h}^{3}\circ\text{div}}{\longrightarrow} & \mathcal{P}_{\tilde{r}}
\Lambda^{3}(\mathcal{T}_{h};\mathbb{V}) 
\end{array}
\label{commuting_diagram2}
\end{equation}

\begin{equation}
\begin{array}{ccc}
H^{1}(\Omega;\mathbb{M}) & \overset{S_{1}}{\longrightarrow} & H^{1}(\Omega;\mathbb{M}) \\
\overline{\Pi}_{\tilde{r},h}^{1,-}\downarrow & & \Pi_{\tilde{r},h}^{2,-}\downarrow \\
\mathcal{P}_{\tilde{r}+2}^{-}\Lambda^{1}(\mathcal{T}_{h};\mathbb{V}) & \overset{\Pi_{\tilde{r},h}^{2,-}
\circ S_{1}}{\longrightarrow} & \mathcal{P}_{\tilde{r}+1}^{-}\Lambda^{2}(\mathcal{T}_{h};\mathbb{V}) 
\end{array}
\label{commuting_diagram3}
\end{equation}

Here $\Pi_{\tilde{r},h}^{3}$ is the $L^{2}$ orthogonal projection operator onto $\mathcal{P}_{\tilde{r}}
\Lambda^{3}(\mathcal{T}_{h};\mathbb{V})$. $\Pi_{\tilde{r},h}^{2}$, $\Pi_{\tilde{r},h}^{2,-}$, 
and $\overline{\Pi}_{\tilde{r},h}^{1,-}$ are projection operators 
into $\mathcal{P}_{\tilde{r}+1}\Lambda^{2}(\mathcal{T}_{h};\mathbb{V})$, 
$\mathcal{P}_{\tilde{r}+1}^{-}\Lambda^{2}(\mathcal{T}_{h};\mathbb{V})$, and 
$\mathcal{P}_{\tilde{r}+2}^{-}\Lambda^{1}(\mathcal{T}_{h};\mathbb{V})$ respectively.

In \cite{AFW:2006:ECH,AFW:2007:MMEW}, the canonical projection operators introduced 
by Arnold, Falk and Winther can make (\ref{commuting_diagram1},\ref{commuting_diagram2},
\ref{commuting_diagram3}) commute for meshes with uniform order. But for meshes with 
variable order, the natural generalization of the projection operators fails to
make commute both~(\ref{commuting_diagram1}) and~(\ref{commuting_diagram2}), see 
a counter-example 
presented in the appendix of \cite{QD:2009:MMEW}. To overcome the difficulty, 
we recalled 
Projection Based (PB) interpolation operators from \cite{QD:2009:MMEW}.
According to Lemma $19$ and 
Lemma $20$ in \cite{QD:2009:MMEW}, there exists projection based interpolation operator 
$\Pi_{\tilde{r},h}^{2}$, which satisfies the following properties. 

\begin{equation}
\text{div}\Pi_{\tilde{r},h}^{2}\tau=\Pi_{\tilde{r},h}^{3}\text{div}\tau,\quad 
\tau\in H^{1}(\Omega;\mathbb{M}).
\label{cd1}
\end{equation}

\begin{equation}
\Vert\Pi_{\tilde{r},h}^{2}\tau\Vert\leq C\Vert\tau\Vert_{H^{1}(\Omega;\mathbb{M})}, 
\quad\tau\in H^{1}(\Omega;\mathbb{M}).
\label{cd1bound}
\end{equation} 
Here $C$ is independent of $\tau$ and $h$. Though Lemma $20$ in 
\cite{QD:2009:MMEW} has been proved for quasi-uniform meshes only, 
it is straightforward to 
extend it to get (\ref{cd1bound}) for shape regular meshes as well. 
Please refer to \cite{QD:2009:MMEW} 
for the details on the PB interpolation operators.

With the PB operators in place, the difficulty shifted to 
to defining 
a special projection operator $\overline{\Pi}_{\tilde{r},h}^{1,-}$, 
denoted by $W_{h}$ in \cite{QD:2009:MMEW}, 
that makes now~(\ref{commuting_diagram3}) commute. 
The commutativity property followed directly from the construction of $W_{h}$
but proving that it is well-defined, turned out to be difficult.
We managed to show only that
$\overline{\Pi}_{\tilde{r},h}^{1,-}$ is well-defined 
for $0\leq\tilde{r}\leq 3$ with $n=2$. In the following, we will use a different
reasoning to demonstrate that 
{\em there exist} projection operators $\Pi_{\tilde{r},h}^{2,-}$ and 
$\overline{\Pi}_{\tilde{r},h}^{1,-}$, both well-defined, that make 
(\ref{commuting_diagram2}),(\ref{commuting_diagram3}) commute for arbitrary 3D 
meshes of arbitrary order. Note that the operators will not be constructed
explicitly.

\subsection{Projection operators on a reference tetrahedron}

Let $\hat{T}$ be a fixed tetrahedron in $\mathbb{R}^{3}$. We are going to 
design projection operators $\Pi_{\tilde{r},\hat{T}}^{2,-}$ and 
$\Pi_{\tilde{r},\hat{T}}^{1,-}$ into $\mathcal{P}_{\tilde{r}+1}^{-}\Lambda^{2}
(\hat{T};\mathbb{V})$, and $\mathcal{P}_{\tilde{r}+2}^{-}\Lambda^{1}(\hat{T};\mathbb{V})$ 
respectively.

\begin{definition}
We take $\tilde{r}$ to be a mapping from $\triangle (\hat{T})$ to $\mathbb{Z}_{+}$ 
such that if $\hat{e},\hat{f}\in\triangle (\hat{T})$ and $\hat{e}\subset\hat{f}$, 
then $\tilde{r}(\hat{e})\leq\tilde{r}(\hat{f})$. We put $k=\dim
\text{curl}_{\hat{\boldsymbol{x}}}\mathring{\mathcal{P}}_{\tilde{r}(\hat{T})+1}\Lambda^{1}
(\hat{T};\mathbb{V})$.

We define $\{\hat{\boldsymbol{f}}_{\tilde{r},1},\cdots,\hat{\boldsymbol{f}}_{\tilde{r},k}\}$ 
as a basis of $\text{curl}_{\hat{\boldsymbol{x}}}\mathring{\mathcal{P}}_{\tilde{r}(\hat{T})+1}
\Lambda^{1}(\hat{T};\mathbb{V})$. We define $\{\hat{\boldsymbol{g}}_{\tilde{r},1},\cdots,
\hat{\boldsymbol{g}}_{\tilde{r},k}\}$ as a linearly independent subset of $\mathcal{P}_{\tilde{r}
(\hat{T})-1}(\hat{T};\mathbb{M})$ such that $\{\hat{\boldsymbol{g}}_{\tilde{r},1},\cdots,
\hat{\boldsymbol{g}}_{\tilde{r},k}\}\oplus\nabla_{\hat{\boldsymbol{x}}}\mathcal{P}_{\tilde{r}}
\Lambda^{3}(\hat{T};\mathbb{V})=\mathcal{P}_{\tilde{r}(\hat{T})-1}(\hat{T};\mathbb{M})$. 
We define $\hat{\boldsymbol{h}}_{\tilde{r},i}(\hat{\boldsymbol{x}},t)=(1-t)\hat{\boldsymbol{f}}
_{\tilde{r}}(\hat{\boldsymbol{x}})+t\hat{\boldsymbol{g}}_{\tilde{r}}(\hat{\boldsymbol{x}})$ for 
any $1\leq i\leq k$.
\end{definition}

\begin{remark}
It is easy to check that $k = \dim\mathcal{P}_{\tilde{r}(\hat{T})-1}(\hat{T};\mathbb{M})-
\dim\nabla_{\hat{\boldsymbol{x}}}\mathcal{P}_{\tilde{r}}\Lambda^{3}(\hat{T};\mathbb{V})$. 
Take $r = \tilde{r}(\hat{T})$. Then
$\dim\text{curl}_{\hat{\boldsymbol{x}}}\mathring{\mathcal{P}}_{r+1}\Lambda^{1}
(\hat{T};\mathbb{V})=3(\dim\mathring{\mathcal{P}}_{r+1}\Lambda^{1}(\hat{T})-
\dim\mathring{\mathcal{P}}_{r+2}\Lambda^{0}(\hat{T}))
=\dfrac{1}{2}(2r+5)r(r-1)$. 
$\dim\mathcal{P}_{r-1}(\hat{T};\mathbb{M})-\dim\nabla_{\hat{\boldsymbol{x}}}\mathcal{P}_{r}
\Lambda^{3}(\hat{T};\mathbb{V})=3(\dim\mathcal{P}_{r-1}(\hat{T};\mathbb{V})-\dim\mathcal{P}_{r}(\hat{T})
/\mathbb{R})=\dfrac{1}{2}(2r+5)r(r-1)$.
For the dimensions of finite element spaces mentioned above, please refer to formula ($3.1$) 
in \cite{AFW:2006:ECH} 
and page $51$ in \cite{AFW:2006:ECH}.
\end{remark}

\bigskip
\begin{definition}
\label{op2_t}
For any $t\in [0,1]$, we define the linear operator $\Pi_{\tilde{r},\hat{T},t}^{2,-}$ 
mapping $H^{1}(\hat{T};\mathbb{M})$ 
onto $\mathcal{P}_{\tilde{r}+1}^{-}\Lambda^{2}(\hat{T};\mathbb{V})$ 
by the following conditions. 
\begin{equation}
\int_{\hat{T}}\text{div}_{\hat{\boldsymbol{x}}}(\Pi_{\tilde{r},\hat{T},t}^{2,-}\hat{U}-\hat{U})\cdot\hat{\eta}d\hat{\boldsymbol{x}}=0,
\quad\hat{\eta}\in\mathcal{P}_{\tilde{r}(\hat{T})}(\hat{T};\mathbb{V})/\mathbb{R}.
\label{op2_t_div}
\end{equation}
\begin{equation}
\int_{\hat{T}}(\Pi_{\tilde{r},\hat{T},t}^{2,-}\hat{U}-\hat{U}):\hat{\boldsymbol{h}}_{\tilde{r},i}(\hat{\boldsymbol{x}},t)d\hat{\boldsymbol{x}}=0,\quad 1\leq i\leq k.
\label{op2_t_aux}
\end{equation}
\begin{equation}
\int_{\hat{F}}[(\Pi_{\tilde{r},\hat{T},t}^{2,-}\hat{U}-\hat{U})\cdot\hat{\boldsymbol{n}}]\cdot
\hat{\mu}d\hat{s}=0, \quad\hat{F}\in\triangle_{2}(\hat{T}),\quad
\hat{\mu}\in\mathcal{P}_{\tilde{r}(\hat{F})}(\hat{F};\mathbb{V}). 
\label{op2_t_face}
\end{equation}
\end{definition}

\begin{definition}
\label{op1_t}
For any $t\in [0,1]$, we define the linear operator $\Pi_{\tilde{r},\hat{T},t}^{1,-}$ 
mapping $H^{1}(\hat{T};\mathbb{M})$ 
into $\mathcal{P}_{\tilde{r}+2}^{-}\Lambda^{1}(\hat{T};\mathbb{V})$ by the 
following conditions. 
\begin{equation}
\int_{\hat{T}}\text{div}_{\hat{\boldsymbol{x}}}S_{1}(\Pi_{\tilde{r},\hat{T},t}^{1,-}\hat{W}-\hat{W})\cdot\hat{\eta}d\hat{\boldsymbol{x}}=0, \quad
\hat{\eta}\in\mathcal{P}_{\tilde{r}(\hat{T})}(\hat{T};\mathbb{V})/\mathbb{R}.
\label{op1_t_div}
\end{equation}
\begin{equation}
\int_{\hat{T}}S_{1}(\Pi_{\tilde{r},\hat{T},t}^{1,-}\hat{W}-\hat{W}):\hat{\boldsymbol{h}}_{\tilde{r},i}(\hat{\boldsymbol{x}},t)d\hat{\boldsymbol{x}}=0, \quad 1\leq i\leq k.
\label{op1_t_aux}
\end{equation}
\begin{equation}
\int_{\hat{F}}[(\Pi_{\tilde{r},\hat{T},t}^{1,-}\hat{W}-\hat{W})\cdot\hat{\boldsymbol{t}}]\cdot
\hat{\mu}d\hat{s}=0, \quad\hat{F}\in\triangle_{2}(\hat{T}),\quad
\hat{\mu}\in\mathcal{P}_{\tilde{r}(\hat{F})}(\hat{F};\mathbb{V}). 
\label{op1_t_face}
\end{equation}
\begin{equation}
\Pi_{\tilde{r},\hat{T},t}^{1,-}\hat{W}\cdot\hat{\boldsymbol{t}}|_{\hat{e}}=0, \quad\hat{e}
\in\triangle_{1}(\hat{T}).
\label{op1_t_edge}
\end{equation}
\end{definition}

In (\ref{op2_t_face}), $\hat{\boldsymbol{n}}$ is a unit normal vector on $\hat{F}$. 
In (\ref{op1_t_face}), $\hat{\boldsymbol{t}}$ is any tangential vector on $\hat{F}$. 
Notice that the dimension of tangential vector space on $\hat{F}$ is two. 
In (\ref{op1_t_edge}), 
$\hat{\boldsymbol{t}}$ is a tangential vector along $\hat{e}$.

\bigskip
\begin{lemma}
\label{op2_t_well_defined}
For any $\tilde{r}(\hat{T})\in\mathbb{Z}_{+}$, 
operator $\Pi_{\tilde{r},\hat{T},t}^{2,-}$ is a linear 
projection, and a well-defined operator for all but finitely many values of $t\in [0,1]$.
\end{lemma}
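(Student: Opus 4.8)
The plan is to establish the lemma in two stages: first, a dimension count showing that the number of linear conditions defining $\Pi_{\tilde{r},\hat{T},t}^{2,-}$ equals $\dim\mathcal{P}_{\tilde{r}+1}^{-}\Lambda^{2}(\hat{T};\mathbb{V})$, so that the operator is well-defined precisely when the associated (square) linear system is nonsingular; second, an analyticity-in-$t$ argument showing that the determinant of that system, which is a polynomial in $t$, is not identically zero, hence vanishes for at most finitely many $t\in[0,1]$.

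For the dimension count, I would add up the conditions in (\ref{op2_t_div}), (\ref{op2_t_aux}), (\ref{op2_t_face}): the divergence conditions contribute $\dim\mathcal{P}_{\tilde{r}(\hat{T})}(\hat{T};\mathbb{V})/\mathbb{R}$, the auxiliary conditions contribute $k$, and the face conditions contribute $\sum_{\hat{F}\in\triangle_{2}(\hat{T})}\dim\mathcal{P}_{\tilde{r}(\hat{F})}(\hat{F};\mathbb{V})$. Using the decomposition of $\mathcal{P}_{\tilde{r}+1}^{-}\Lambda^{2}$ implicit in the exact sequence — the divergence maps $\mathcal{P}_{\tilde{r}+1}^{-}\Lambda^{2}(\hat{T};\mathbb{V})$ onto $\mathcal{P}_{\tilde{r}}\Lambda^{3}(\hat{T};\mathbb{V})$ with kernel $\mathrm{curl}_{\hat{\boldsymbol{x}}}\mathcal{P}_{\tilde{r}+1}^{-}\Lambda^{1}(\hat{T};\mathbb{V})$, and the interior part of that kernel has dimension governed by $k$ via the remark preceding Definition~\ref{op2_t} — one checks the totals match. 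This is essentially the same bookkeeping used by Arnold–Falk–Winther for their canonical degrees of freedom, adapted to the variable-order trace constraints, and I expect it to be routine though slightly tedious.

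The substance is the second stage. Fix bases for $H^{1}(\hat{T};\mathbb{M})$'s relevant quotient and for $\mathcal{P}_{\tilde{r}+1}^{-}\Lambda^{2}(\hat{T};\mathbb{V})$; then the conditions (\ref{op2_t_div})–(\ref{op2_t_face}) applied to the unknown $\Pi_{\tilde{r},\hat{T},t}^{2,-}\hat{U}\in\mathcal{P}_{\tilde{r}+1}^{-}\Lambda^{2}(\hat{T};\mathbb{V})$ form a square linear system $M(t)\,\boldsymbol{c}=\boldsymbol{b}(t)$ whose matrix $M(t)$ depends on $t$ only through the term $\int_{\hat{T}}(\cdot):\hat{\boldsymbol{h}}_{\tilde{r},i}(\hat{\boldsymbol{x}},t)$, and $\hat{\boldsymbol{h}}_{\tilde{r},i}$ is affine in $t$ by construction. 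Hence $\det M(t)$ is a polynomial in $t$. To show it is not the zero polynomial it suffices to exhibit a single value of $t$ at which $M(t)$ is nonsingular; the natural candidates are $t=0$, where $\hat{\boldsymbol{h}}_{\tilde{r},i}=\hat{\boldsymbol{f}}_{\tilde{r},i}\in\mathrm{curl}_{\hat{\boldsymbol{x}}}\mathring{\mathcal{P}}_{\tilde{r}(\hat{T})+1}\Lambda^{1}$, and $t=1$, where $\hat{\boldsymbol{h}}_{\tilde{r},i}=\hat{\boldsymbol{g}}_{\tilde{r},i}$, chosen as a complement of $\nabla_{\hat{\boldsymbol{x}}}\mathcal{P}_{\tilde{r}}\Lambda^{3}$ inside $\mathcal{P}_{\tilde{r}(\hat{T})-1}(\hat{T};\mathbb{M})$. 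At $t=1$ the unisolvence should follow cleanly: if $\Pi^{2,-}\hat{U}=:\omega$ satisfies the homogeneous system, the face conditions force $\omega\cdot\hat{\boldsymbol{n}}|_{\hat{F}}=0$, so $\omega\in\mathring{\mathcal{P}}_{\tilde{r}+1}^{-}\Lambda^{2}$; the divergence conditions then say $\mathrm{div}\,\omega$ is constant, but a constant in the range of $\mathrm{div}$ on $\mathring{\mathcal{P}}_{\tilde{r}+1}^{-}\Lambda^{2}$ must vanish (integrate over $\hat{T}$), so $\omega=\mathrm{curl}_{\hat{\boldsymbol{x}}}\phi$ with $\phi\in\mathring{\mathcal{P}}_{\tilde{r}(\hat{T})+1}\Lambda^{1}$; finally the auxiliary conditions test $\omega$ against a complement of $\nabla\mathcal{P}_{\tilde{r}}\Lambda^{3}$, and since $\mathrm{curl}\,\phi$ pairs trivially with gradients this pins down the $k$-dimensional curl-component and gives $\omega=0$. (If $t=1$ proves awkward because $\hat{\boldsymbol{g}}_{\tilde{r},i}$ is only a subset of $\mathcal{P}_{\tilde{r}(\hat{T})-1}(\hat{T};\mathbb{M})$ rather than a complement of the gradients in exactly the right dimension, I would run the same argument at $t=0$ using the AFW-style unisolvence of curl-bubbles instead.) The main obstacle is making this homogeneous-solution argument airtight in the variable-order setting, where the trace spaces $\mathcal{P}_{\tilde{r}(\hat{F})}$, $\mathcal{P}_{\tilde{r}(\hat{e})-1}$ have degrees strictly below the interior degree, so one cannot simply quote a single uniform-order unisolvence lemma — one has to recombine the interior and trace degrees of freedom carefully, exactly the place where the naive generalization of the canonical projections failed in \cite{QD:2009:MMEW}. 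Once one value of $t$ works, "all but finitely many $t\in[0,1]$" is immediate since a nonzero polynomial has finitely many roots, and linearity plus idempotency (projection onto $\mathcal{P}_{\tilde{r}+1}^{-}\Lambda^{2}$, which is invariant under the defining conditions) are then formal.
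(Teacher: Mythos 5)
Your proposal follows essentially the same route as the paper: reduce well-definedness to unisolvence of the homogeneous square system at one endpoint of the homotopy in $t$, and then observe that $\det C(t)$ is a polynomial in $t$ that is not identically zero, hence vanishes at only finitely many points of $[0,1]$. The one step to tighten is your closing claim that ``$k$ conditions pin down the $k$-dimensional curl-component,'' which is a dimension count rather than a proof of nondegeneracy of the pairing; the paper closes this by noting that the integrated-by-parts divergence conditions together with the auxiliary conditions amount to orthogonality of $\hat{U}$ to all of $\mathcal{P}_{\tilde{r}(\hat{T})-1}(\hat{T};\mathbb{M})$ and then invoking Lemma 4.11 of \cite{AFW:2006:ECH} (with Theorem 4.12 of the same reference supplying, as you do via the face conditions, the reduction to $\mathring{\mathcal{P}}^{-}_{\tilde{r}(\hat{T})+1}\Lambda^{2}(\hat{T};\mathbb{V})$).
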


\begin{proof}
It is easy to see that the conditions (\ref{op2_t_div},\ref{op2_t_aux},\ref{op2_t_face}) 
are well-defined for 
any $\hat{U}\in H^{1}(\hat{T};\mathbb{M})$. Obviously, if $\Pi_{\tilde{r},\hat{T},t}^{2,-}$ 
is well-defined, then it is linear and a projection. It is sufficient to show that 
for any $\hat{U}\in
\mathcal{P}_{\tilde{r}+1}^{-}\Lambda^{2}(\hat{T};\mathbb{V})$, 
$\hat{U}=0$ if $\Pi_{\tilde{r},\hat{T},t}^{2,-}
\hat{U}=0$.

By Theorem $4.12$ in \cite{AFW:2006:ECH}, 
$\hat{U}\in\mathring{\mathcal{P}}_{\tilde{r}(\hat{T})+1}
^{-}\Lambda^{2}(\hat{T};\mathbb{V})$ because 
\begin{equation*}
\int_{\hat{F}}[\hat{U}\cdot\hat{\boldsymbol{n}}]\cdot
\hat{\mu}d\hat{s}=0, \quad \hat{F}\in\triangle_{2}(\hat{T}),\quad
\hat{\mu}\in\mathcal{P}_{\tilde{r}(\hat{F})}(\hat{F};\mathbb{V}). 
\end{equation*}
So it is sufficient to show that, for any $\hat{U}\in\mathring{\mathcal{P}}_{\tilde{r}(\hat{T})+1}
^{-}\Lambda^{2}(\hat{T};\mathbb{V})$, $\hat{U}=0$, provided,
\begin{equation}
\int_{\hat{T}}\text{div}_{\hat{\boldsymbol{x}}}\hat{U}\cdot\hat{\eta}d\hat{\boldsymbol{x}}=0,
\quad\hat{\eta}\in\mathcal{P}_{\tilde{r}(\hat{T})}(\hat{T};\mathbb{V})/\mathbb{R};
\label{op2_t_div_proof}
\end{equation}
\begin{equation}
\int_{\hat{T}}\hat{U}:\hat{\boldsymbol{h}}_{\tilde{r},i}(\hat{\boldsymbol{x}},t)d
\hat{\boldsymbol{x}}=0,\quad 1\leq i\leq k.
\label{op2_t_aux_proof}
\end{equation}
Since $\hat{U}\in\mathring{\mathcal{P}}_{\tilde{r}(\hat{T})+1}
^{-}\Lambda^{2}(\hat{T};\mathbb{V})$, (\ref{op2_t_div_proof}) can be integrated by 
parts to yield 
\begin{equation}
\int_{\hat{T}}\hat{U}:\nabla_{\hat{\boldsymbol{x}}}\hat{\eta}d\hat{\boldsymbol{x}}=0,
\quad\hat{\eta}\in\mathcal{P}_{\tilde{r}(\hat{T})}(\hat{T};\mathbb{V}).
\label{op2_t_div_proof2}
\end{equation}

According to the definition of $\hat{\boldsymbol{h}}_{\tilde{r},i}(\hat{\boldsymbol{x}},t)$ 
and the fact that $\mathcal{P}_{\tilde{r}}\Lambda^{3}(\hat{T};\mathbb{V})=
\mathcal{P}_{\tilde{r}(\hat{T})}(\hat{T};\mathbb{V})$, the assertion is true for $t=0$.
Indeed, when $t=0$, conditions (\ref{op2_t_div_proof2},\ref{op2_t_aux_proof}) 
can be rewritten as 
\begin{equation*}
\int_{\hat{T}}\hat{U}:\hat{Q}d\hat{\boldsymbol{x}}=0,
\quad\hat{Q}\in\mathcal{P}_{\tilde{r}(\hat{T})-1}(\hat{T};\mathbb{M}).
\end{equation*}
By Lemma $4.11$ in \cite{AFW:2006:ECH}, we have $\hat{U}=0$. This implies that 
$\Pi_{\tilde{r},\hat{T},t}^{2,-}$ is well-defined for $t=0$.

We denote by $C(t)$ the matrix associated with the left hand side of conditions 
(\ref{op2_t_div},\ref{op2_t_aux},\ref{op2_t_face}). Then $\Pi_{\tilde{r},\hat{T},t}^{2,-}$ 
is well-defined if and only if $C(t)$ is a square nonsingular matrix. We have already known 
that $C(0)$ is a square nonsingular matrix. So $C(t)$ is a square matrix for any $t\in [0,1]$.
Notice that $\det(C(t))$ is a polynomial of a single variable $t$. Since $\det(C(0))\neq 0$, 
then there are at most finitely many $t\in [0,1]$ which make $\det(C(t))=0$. 
This implies that 
$\Pi_{\tilde{r},\hat{T},t}^{2,-}$ is well-defined for all but finitely many 
values of $t\in [0,1]$. 
\end{proof}

\bigskip
\begin{lemma}
\label{op1_t_well_defined}
For any $\tilde{r}(\hat{T})\in\mathbb{Z}_{+}$, operator $\Pi_{\tilde{r},\hat{T},t}^{1,-}$ 
is a well-defined, linear 
projection operator for all but finitely many values of $t\in [0,1]$.
\end{lemma}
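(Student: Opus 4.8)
The plan is to mimic exactly the structure of the proof of Lemma~\ref{op2_t_well_defined}, since the two operators are built on the same template (a divergence condition, an auxiliary condition against the homotopy family $\hat{\boldsymbol{h}}_{\tilde{r},i}(\cdot,t)$, and boundary trace conditions), the only difference being that $S_{1}$ is inserted in the two volume conditions and the trace conditions are tangential rather than normal. First I would observe that the defining conditions \eqref{op1_t_div}, \eqref{op1_t_aux}, \eqref{op1_t_face}, \eqref{op1_t_edge} are well-defined linear functionals on $H^{1}(\hat{T};\mathbb{M})$; since the target space $\mathcal{P}_{\tilde{r}+2}^{-}\Lambda^{1}(\hat{T};\mathbb{V})$ has a dimension matching the number of conditions, well-definedness is equivalent to injectivity on that target space, and once well-defined the operator is automatically linear and a projection (the conditions reproduce any $\hat{W}$ already in the space). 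So it suffices to show: if $\hat{W}\in\mathcal{P}_{\tilde{r}+2}^{-}\Lambda^{1}(\hat{T};\mathbb{V})$ satisfies \eqref{op1_t_div}, \eqref{op1_t_aux}, \eqref{op1_t_face}, \eqref{op1_t_edge} with right-hand side zero, then $\hat{W}=0$.

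The next step is to use the trace conditions to push $\hat{W}$ into the space with vanishing traces. Conditions \eqref{op1_t_face} and \eqref{op1_t_edge}, together with the characterization of degrees of freedom for $\mathcal{P}_{r+2}^{-}\Lambda^{1}$ (Theorem~4.12 in \cite{AFW:2006:ECH}, applied rowwise), force $\hat{W}\in\mathring{\mathcal{P}}_{\tilde{r}(\hat{T})+2}^{-}\Lambda^{1}(\hat{T};\mathbb{V})$, i.e. $\hat{W}$ has vanishing tangential trace on $\partial\hat{T}$. Now I would handle the $t=0$ case by hand: when $t=0$ the auxiliary condition \eqref{op1_t_aux} reads $\int_{\hat{T}}S_{1}\hat{W}:\hat{\boldsymbol{f}}_{\tilde{r},i}=0$, and after integrating \eqref{op1_t_div} by parts (legitimate since $S_{1}\hat{W}$ has vanishing normal trace on each face by Lemma~\ref{tangential_normal_trace}) one gets $\int_{\hat{T}}S_{1}\hat{W}:\nabla_{\hat{\boldsymbol{x}}}\hat{\eta}=0$ for $\hat{\eta}\in\mathcal{P}_{\tilde{r}(\hat{T})}(\hat{T};\mathbb{V})$. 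Since $\{\hat{\boldsymbol{f}}_{\tilde{r},i}\}$ spans $\mathrm{curl}\,\mathring{\mathcal{P}}_{r+1}\Lambda^{1}$ and $\{\hat{\boldsymbol{g}}_{\tilde{r},i}\}$ together with $\nabla\mathcal{P}_{\tilde{r}}\Lambda^{3}$ spans $\mathcal{P}_{r-1}(\hat{T};\mathbb{M})$, and $\mathrm{curl}\,\mathring{\mathcal{P}}_{r+1}\Lambda^{1}$ is a complement of $\nabla\mathcal{P}_{r}\Lambda^{3}$ inside $\mathcal{P}_{r-1}(\hat{T};\mathbb{M})$ by the exact sequence, the two conditions combine to give $\int_{\hat{T}}S_{1}\hat{W}:\hat{Q}=0$ for all $\hat{Q}\in\mathcal{P}_{\tilde{r}(\hat{T})-1}(\hat{T};\mathbb{M})$; this is precisely the hypothesis of Lemma~\ref{S1_unisolvent} (with $r=\tilde{r}(\hat{T})$), which yields $\hat{W}=0$. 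Hence $\Pi_{\tilde{r},\hat{T},0}^{1,-}$ is well-defined.

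Finally I would run the polynomial-in-$t$ argument verbatim: let $C(t)$ be the matrix of the conditions \eqref{op1_t_div}, \eqref{op1_t_aux}, \eqref{op1_t_face}, \eqref{op1_t_edge} expressed in fixed bases of the domain-side degrees of freedom and of $\mathcal{P}_{\tilde{r}+2}^{-}\Lambda^{1}(\hat{T};\mathbb{V})$. Because the entries of $\hat{\boldsymbol{h}}_{\tilde{r},i}(\hat{\boldsymbol{x}},t)=(1-t)\hat{\boldsymbol{f}}_{\tilde{r},i}(\hat{\boldsymbol{x}})+t\hat{\boldsymbol{g}}_{\tilde{r},i}(\hat{\boldsymbol{x}})$ are affine in $t$, each entry of $C(t)$ is a polynomial in $t$, so $\det C(t)$ is a polynomial in $t$. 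The $t=0$ step shows $C(0)$ is square and nonsingular, so $\det C$ is not identically zero, hence has only finitely many roots in $[0,1]$; for every other $t$ the operator is well-defined, linear, and a projection. The main obstacle — and the only place where real content enters beyond copying the previous proof — is verifying that the integration by parts in \eqref{op1_t_div} is valid, which is exactly why Lemma~\ref{tangential_normal_trace} was proved earlier: the edge and face tangential-trace conditions \eqref{op1_t_face}, \eqref{op1_t_edge} give $\hat{W}\cdot\hat{\boldsymbol{t}}|_{\hat{F}}=0$, hence $S_{1}\hat{W}\cdot\hat{\boldsymbol{n}}|_{\hat{F}}=0$, so no boundary term survives; together with Lemma~\ref{S1_unisolvent} this closes the $t=0$ base case cleanly.
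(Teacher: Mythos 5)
Your proof is correct and follows the same template as the paper's: reduce to injectivity on $\mathcal{P}_{\tilde{r}+2}^{-}\Lambda^{1}(\hat{T};\mathbb{V})$, use Theorem 4.12 of \cite{AFW:2006:ECH} with \eqref{op1_t_face}, \eqref{op1_t_edge} to land in $\mathring{\mathcal{P}}_{\tilde{r}(\hat{T})+2}^{-}\Lambda^{1}(\hat{T};\mathbb{V})$, invoke Lemma \ref{tangential_normal_trace} to integrate \eqref{op1_t_div} by parts without boundary terms, conclude via Lemma \ref{S1_unisolvent}, and finish with the $\det C(t)$ polynomial argument. The one substantive deviation is your choice of base point: you anchor at $t=0$, where the auxiliary tests are the $\hat{\boldsymbol{f}}_{\tilde{r},i}$, so you must additionally know that $\text{curl}_{\hat{\boldsymbol{x}}}\mathring{\mathcal{P}}_{\tilde{r}(\hat{T})+1}\Lambda^{1}(\hat{T};\mathbb{V})$ complements $\nabla_{\hat{\boldsymbol{x}}}\mathcal{P}_{\tilde{r}}\Lambda^{3}(\hat{T};\mathbb{V})$ in $\mathcal{P}_{\tilde{r}(\hat{T})-1}(\hat{T};\mathbb{M})$ --- a true fact (trivial intersection plus the dimension count in the paper's remark), and one the paper itself leans on in Lemma \ref{op2_t_well_defined}, but which you assert only by appeal to ``the exact sequence.'' The paper instead anchors this lemma at $t=1$, where the tests are the $\hat{\boldsymbol{g}}_{\tilde{r},i}$ and the needed spanning property $\{\hat{\boldsymbol{g}}_{\tilde{r},i}\}\oplus\nabla_{\hat{\boldsymbol{x}}}\mathcal{P}_{\tilde{r}}\Lambda^{3}(\hat{T};\mathbb{V})=\mathcal{P}_{\tilde{r}(\hat{T})-1}(\hat{T};\mathbb{M})$ holds by the very definition of the $\hat{\boldsymbol{g}}_{\tilde{r},i}$, so no extra fact is required. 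Since the determinant argument only needs one nonsingular value of $t$, either anchor yields the conclusion; if you keep $t=0$, spell out the complementarity argument rather than leaving it as an appeal to exactness.
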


\begin{proof}
It is easy to see that the conditions~(\ref{op1_t_div},\ref{op1_t_aux},\ref{op1_t_face},
\ref{op1_t_edge}) 
are well-defined for any $\hat{W}\in H^{1}(\hat{T};\mathbb{M})$. Obviously, 
if $\Pi_{\tilde{r},\hat{T},t}^{1,-}$ 
is well-defined, then it is linear and a projection. It is sufficient to show 
that for any $\hat{W}\in
\mathcal{P}_{\tilde{r}+2}^{-}\Lambda^{1}(\hat{T};\mathbb{V})$ with $\hat{W}\cdot\hat{\boldsymbol{t}}|_{\hat{e}}=0$, 
then $\hat{W}=0$ if $\Pi_{\tilde{r},\hat{T},t}^{1,-}\hat{W}=0$. 
Here $\hat{e}$ is any edge of $\hat{T}$, and 
$\hat{\boldsymbol{t}}$ is a tangential vector along $\hat{e}$. 

By Theorem $4.12$ in \cite{AFW:2006:ECH}, 
$\hat{W}\in\mathring{\mathcal{P}}_{\tilde{r}(\hat{T})+2}
^{-}\Lambda^{1}(\hat{T};\mathbb{V})$ because  
\begin{equation*}
\int_{\hat{F}}[\hat{W}\cdot\hat{\boldsymbol{t}}]\cdot
\hat{\mu}d\hat{s}=0, \quad\hat{F}\in\triangle_{2}(\hat{T}),\quad
\hat{\mu}\in\mathcal{P}_{\tilde{r}(\hat{F})}(\hat{F};\mathbb{V}); 
\end{equation*}
\begin{equation*}
\hat{W}\cdot\hat{\boldsymbol{t}}|_{\hat{e}}=0,\quad\hat{e}
\in\triangle_{1}(\hat{T}).
\end{equation*}
So it is sufficient to show that, 
for any $\hat{W}\in\mathring{\mathcal{P}}_{\tilde{r}(\hat{T})+2}
^{-}\Lambda^{1}(\hat{T};\mathbb{V})$, $\hat{W}=0$, provided,
\begin{equation}
\int_{\hat{T}}\text{div}_{\hat{\boldsymbol{x}}}S_{1}\hat{W}\cdot\hat{\eta}d\hat{\boldsymbol{x}}=0,
\quad\hat{\eta}\in\mathcal{P}_{\tilde{r}(\hat{T})}(\hat{T};\mathbb{V})/\mathbb{R};
\label{op1_t_div_proof}
\end{equation}
\begin{equation}
\int_{\hat{T}}S_{1}\hat{W}:\hat{\boldsymbol{h}}_{\tilde{r},i}(\hat{\boldsymbol{x}},t)
d\hat{\boldsymbol{x}}=0,\quad 1\leq i\leq k.
\label{op1_t_aux_proof}
\end{equation}

Notice that $\hat{W}\in\mathring{\mathcal{P}}_{\tilde{r}(\hat{T})+2}
^{-}\Lambda^{1}(\hat{T};\mathbb{V})$. By Lemma \ref{tangential_normal_trace}, 
we have that $S_{1}\hat{W}\cdot\hat{\boldsymbol{n}}|_{\hat{F}}=0$ 
for any $\hat{F}\in\triangle_{2}(\hat{T})$. So we can integrate thus 
(\ref{op1_t_div_proof}) by parts without obtaining any boundary term. 
Condition (\ref{op1_t_div_proof}) 
can be rewritten as follows.
\begin{equation}
\int_{\hat{T}}S_{1}\hat{W}:\nabla_{\hat{\boldsymbol{x}}}\hat{\eta}d\hat{\boldsymbol{x}}=0,
\quad\hat{\eta}\in\mathcal{P}_{\tilde{r}(\hat{T})}(\hat{T};\mathbb{V}).
\label{op1_t_div_proof2}
\end{equation}
By the definition of $\hat{\boldsymbol{h}}_{\tilde{r},i}(\hat{\boldsymbol{x}},t)$, for $t=1$,
conditions (\ref{op1_t_div_proof2}) and (\ref{op1_t_aux_proof}) can be rewritten as
\begin{equation*}
\int_{\hat{T}}S_{1}\hat{W}:\hat{Q}d\hat{\boldsymbol{x}}=0,
\quad\hat{Q}\in\mathcal{P}_{\tilde{r}
(\hat{T})-1}(\hat{T};\mathbb{M}).
\end{equation*}
Lemma~\ref{S1_unisolvent} implies then that $\hat{W}=0$. 
This shows that $\Pi_{\tilde{r},\hat{T},t}^{1,-}$ 
is well-defined for $t=1$. 

We denote by $C(t)$ the matrix associated with the left hand side of conditions 
(\ref{op1_t_div},\ref{op1_t_aux},\ref{op1_t_face},\ref{op1_t_edge}). 
Then $\Pi_{\tilde{r},\hat{T},t}^{1,-}$ 
is well-defined if and only if $C(t)$ is a square non-singular matrix. 
We have already known 
that $C(1)$ is a square non-singular matrix. So $C(t)$ is a square matrix 
for any $t\in [0,1]$.
Notice that $\det(C(t))$ is a polynomial of a single variable $t$. 
Since $\det(C(1))\neq 0$, 
then there are at most finitely many $t\in [0,1]$ 
which make $\det(C(t))=0$. This implies that 
$\Pi_{\tilde{r},\hat{T},t}^{1,-}$ is well-defined for all but 
finitely many values of $t\in [0,1]$. 
\end{proof}

\bigskip
According to Lemma~\ref{op2_t_well_defined} and Lemma~\ref{op1_t_well_defined}, 
we can choose 
$t_{r}\in [0,1]$ for any $\tilde{r}(\hat{T})$ 
such that both $\Pi_{\tilde{r},\hat{T},t_{r}}^{2,-}$ 
and $\Pi_{\tilde{r},\hat{T},t_{r}}^{1,-}$ are well-defined. 
Here $t_{r}$ depends only on $\tilde{r}(\hat{T})$.

\begin{definition}
\label{operators_reference}
We define operators $\Pi_{\tilde{r},\hat{T}}^{2,-}:=\Pi_{\tilde{r},\hat{T},t_{r}}^{2,-}$ 
and $\Pi_{\tilde{r},\hat{T}}^{1,-}:=\Pi_{\tilde{r},\hat{T},t_{r}}^{1,-}$.
\end{definition}

\subsection{Projection operators on a physical tetrahedron}

Let $T$ be an arbitrary tetrahedron in $\mathbb{R}^{3}$. Then there exists an affine mapping 
from the reference tetrahedron $\hat{T}$ to $T$, defined by 
\begin{equation}
\boldsymbol{x}=A\hat{\boldsymbol{x}}+\boldsymbol{b}.
\label{affine_transform}
\end{equation}
Here $A$ is a $3\times 3$ real non-singular matrix, and $\boldsymbol{b}$ 
is a vector in $\mathbb{R}^{3}$.
In the following, we always relate $\boldsymbol{x}$ 
and $\hat{\boldsymbol{x}}$ by (\ref{affine_transform}).
We take $\tilde{r}$ to be a mapping from $\triangle (T)$ to $\mathbb{Z}_{+}$ 
such that if $e,f\in\triangle (T)$ and $e\subset f$, 
then $\tilde{r}(e)\leq\tilde{r}(f)$. In the following, 
we denote by $\hat{\boldsymbol{x}}(\boldsymbol{x})$ 
the inverse of the affine mapping described above.

\bigskip
\begin{definition}
\label{op2_phy}
We define the linear operator $\Pi_{\tilde{r},T}^{2,-}$ mapping $H^{1}(T;\mathbb{M})$ onto 
$\mathcal{P}_{\tilde{r}+1}^{-}\Lambda^{2}(T;\mathbb{V})$ by the following conditions. 
\begin{equation}
\int_{T}\text{div}(\Pi_{\tilde{r},T}^{2,-}U-U)\cdot\eta d\boldsymbol{x}=0,\quad
\eta\in\mathcal{P}_{\tilde{r}(T)}(T;\mathbb{V})/\mathbb{R}.
\label{op2_div_phy}
\end{equation}
\begin{equation}
\int_{T}(\Pi_{\tilde{r},T}^{2,-}U-U)(\boldsymbol{x}):[A\hat{\boldsymbol{h}}_{\tilde{r},i}(\hat{\boldsymbol{x}}(\boldsymbol{x}),t_{r})A^{-1}]d\boldsymbol{x}=0,\quad 1\leq i\leq k.
\label{op2_aux_phy}
\end{equation}
\begin{equation}
\int_{F}[(\Pi_{\tilde{r},T}^{2,-}U-U)\cdot\boldsymbol{n}]\cdot
\mu ds=0, \quad F\in\triangle_{2}(T),\quad
\mu\in\mathcal{P}_{\tilde{r}(F)}(F;\mathbb{V}). 
\label{op2_face_phy}
\end{equation}
\end{definition}

\begin{definition}
\label{op1_phy}
We define the linear operator $\Pi_{\tilde{r},T}^{1,-}$ mapping $H^{1}(T;\mathbb{M})$ 
into $\mathcal{P}_{\tilde{r}+2}^{-}\Lambda^{1}(T;\mathbb{V})$ by the following conditions. 
\begin{equation}
\int_{T}\text{div}S_{1}(\Pi_{\tilde{r},T}^{1,-}W-W)\cdot\eta d\boldsymbol{x}=0, \quad
\eta\in\mathcal{P}_{\tilde{r}(T)}(T;\mathbb{V})/\mathbb{R}.
\label{op1_div_phy}
\end{equation}
\begin{equation}
\int_{T}S_{1}(\Pi_{\tilde{r},T}^{1,-}W-W):[A\hat{\boldsymbol{h}}_{\tilde{r},i}(\hat{\boldsymbol{x}}(\boldsymbol{x}),t_{r})A^{-1}]d\boldsymbol{x}=0,\quad 1\leq i\leq k.
\label{op1_aux_phy}
\end{equation}
\begin{equation}
\int_{F}[(\Pi_{\tilde{r},T}^{1,-}W-W)\cdot\boldsymbol{t}]\cdot
\mu ds=0, \quad F\in\triangle_{2}(T),\quad
\mu\in\mathcal{P}_{\tilde{r}(F)}(F;\mathbb{V}). 
\label{op1_face_phy}
\end{equation}
\begin{equation}
\Pi_{\tilde{r},T}^{1,-}W\cdot\boldsymbol{t}|_{e}=0, \quad e
\in\triangle_{1}(T).
\label{op1_edge_phy}
\end{equation}
\end{definition}

We want to ``pull back" $\tilde{r}$ from $\triangle (T)$ to $\triangle (\hat{T})$.
We put $\tilde{r}(\hat{e})=\tilde{r}(e)$ for any $\hat{e}\in\triangle (\hat{T})$. 
Here $e:=A\hat{e}+\boldsymbol{b}$. Then we have the following lemma.

\begin{lemma}
\label{op2_op1_well_defined}
For any $U,W\in H^{1}(T;\mathbb{M})$, we define $\hat{U},
\hat{W}\in H^{1}(\hat{T};\mathbb{M})$ by 
\begin{equation*}
U(\boldsymbol{x})=A^{-\top}\hat{U}(\hat{\boldsymbol{x}})A^{\top},\quad 
W(\boldsymbol{x})=A\hat{W}(\hat{\boldsymbol{x}})A^{-1}.
\end{equation*}

Then we have 
\begin{equation}
\Pi_{\tilde{r},T}^{2,-}U(\boldsymbol{x})=A^{-\top}\Pi_{\tilde{r},\hat{T}}^{2,-}\hat{U}
(\hat{\boldsymbol{x}})A^{\top},\quad 
\Pi_{\tilde{r},T}^{1,-}W(\boldsymbol{x})=A\Pi_{\tilde{r},\hat{T}}^{1,-}\hat{W}
(\hat{\boldsymbol{x}})A^{-1},\quad\boldsymbol{x}\in T.
\label{pull_back_op2_op1}
\end{equation}

So operators $\Pi_{\tilde{r},T}^{2,-}$ and $\Pi_{\tilde{r},T}^{1,-}$ are well-defined.
\end{lemma}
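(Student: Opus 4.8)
The plan is to establish the transformation formula (\ref{pull_back_op2_op1}) directly, and then deduce well-definedness of the physical operators from the well-definedness of the reference operators (Definition~\ref{operators_reference}, obtained via Lemma~\ref{op2_t_well_defined} and Lemma~\ref{op1_t_well_defined}). The overall strategy is the standard ``change of variables for commuting projections'' argument: the defining conditions of $\Pi_{\tilde{r},T}^{2,-}$ and $\Pi_{\tilde{r},T}^{1,-}$ are built precisely so that, under the covariant/contravariant Piola-type transformations $U(\boldsymbol{x}) = A^{-\top}\hat{U}(\hat{\boldsymbol{x}})A^{\top}$ and $W(\boldsymbol{x}) = A\hat{W}(\hat{\boldsymbol{x}})A^{-1}$, they map onto the reference conditions (\ref{op2_t_div})--(\ref{op2_t_face}) and (\ref{op1_t_div})--(\ref{op1_t_edge}) at parameter $t = t_r$.

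First I would record the behaviour of the relevant differential operators and traces under the two transformations. For the row-wise divergence acting on matrix fields one has the contravariant (Piola) identity $\operatorname{div}_{\boldsymbol{x}}\big(A^{-\top}\hat{U}(\hat{\boldsymbol{x}})A^{\top}\big) = (\det A)^{-1} A^{-\top}\,\widehat{\operatorname{div}}_{\hat{\boldsymbol{x}}}\hat{U}(\hat{\boldsymbol{x}})$ (applied row-wise, this is just the classical vector Piola transform on each row), so condition (\ref{op2_div_phy}) with test function $\eta(\boldsymbol{x})$ pulls back, after the substitution $\boldsymbol{x} = A\hat{\boldsymbol{x}} + \boldsymbol{b}$ and a cancellation of the Jacobian $\det A$, to (\ref{op2_t_div}) with test function $\hat{\eta}(\hat{\boldsymbol{x}}) = A^{\top}\eta(A\hat{\boldsymbol{x}}+\boldsymbol{b})$; since $\eta$ ranges over $\mathcal{P}_{\tilde{r}(T)}(T;\mathbb{V})/\mathbb{R}$, $\hat{\eta}$ ranges over $\mathcal{P}_{\tilde{r}(\hat T)}(\hat T;\mathbb{V})/\mathbb{R}$ (the constants are preserved since $A$ is constant). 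For the auxiliary conditions, the transformation $B \mapsto A^{-\top}\hat{B}A^{\top}$ is the one dual to the Frobenius pairing against $Q \mapsto A\hat{Q}A^{-1}$, i.e. $\langle A^{-\top}\hat{B}A^{\top}, A\hat{Q}A^{-1}\rangle_{\mathbb{M}} = \langle \hat{B},\hat{Q}\rangle_{\mathbb{M}}$; this is exactly why the test matrices in (\ref{op2_aux_phy}) are written as $A\hat{\boldsymbol{h}}_{\tilde{r},i}(\hat{\boldsymbol{x}}(\boldsymbol{x}),t_r)A^{-1}$, so that condition (\ref{op2_aux_phy}) pulls back verbatim to (\ref{op2_t_aux}) at $t = t_r$. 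The face conditions (\ref{op2_face_phy}) transform likewise: $(A^{-\top}\hat{U}A^{\top})\cdot\boldsymbol{n}$ restricted to a face $F = A\hat{F}+\boldsymbol{b}$ is, up to a positive scalar depending on $A$ and the face, a push-forward of $\hat{U}\cdot\hat{\boldsymbol{n}}$, and since we test against the full space $\mathcal{P}_{\tilde{r}(F)}(F;\mathbb{V})$ the scalar is harmless — giving (\ref{op2_t_face}). The same bookkeeping, now using the covariant (curl-conforming) transform $W = A\hat{W}A^{-1}$ together with Lemma~\ref{S1_S2_commuting_diagram} and the algebraic identity $S_1(A\hat{W}A^{-1})$ relating $\operatorname{div}S_1$ on $T$ to $\widehat{\operatorname{div}}\,S_1$ on $\hat T$, and the tangential-trace transform $W\cdot\boldsymbol{t}$ on faces and edges, pulls (\ref{op1_div_phy})--(\ref{op1_edge_phy}) back to (\ref{op1_t_div})--(\ref{op1_t_edge}). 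Crucially, Lemma~\ref{tangential_normal_trace} guarantees $S_1\hat W\cdot\hat{\boldsymbol{n}}|_{\hat F}=0$ when $\hat W$ has vanishing tangential trace, which is what made (\ref{op1_t_div_proof}) integrable by parts in the reference case, and the pull-back respects this.

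Once these transformation rules are assembled, the proof concludes as follows. The covariant transform $W\mapsto A\hat W A^{-1}$ maps $\mathcal{P}_{\tilde{r}+2}^{-}\Lambda^1(T;\mathbb{V})$ onto $\mathcal{P}_{\tilde{r}+2}^{-}\Lambda^1(\hat T;\mathbb{V})$ (this is the standard fact that the first-kind/second-kind Nédélec-type spaces are affine-invariant, and the variable-order constraints in Definition~\ref{FEM_spaces_single_triangle_variable_order} are preserved because $\tilde r(\hat e)=\tilde r(e)$ by construction), and similarly the contravariant transform preserves $\mathcal{P}_{\tilde{r}+1}^{-}\Lambda^2$. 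Therefore, defining $\Pi_{\tilde{r},T}^{2,-}U(\boldsymbol{x}) := A^{-\top}\big(\Pi_{\tilde{r},\hat T}^{2,-}\hat U\big)(\hat{\boldsymbol{x}})A^{\top}$ and $\Pi_{\tilde{r},T}^{1,-}W(\boldsymbol{x}) := A\big(\Pi_{\tilde{r},\hat T}^{1,-}\hat W\big)(\hat{\boldsymbol{x}})A^{-1}$ yields elements of the correct target spaces, and by the pull-back of each defining condition these elements satisfy exactly (\ref{op2_div_phy})--(\ref{op2_face_phy}) and (\ref{op1_div_phy})--(\ref{op1_edge_phy}); uniqueness on the physical side follows from uniqueness on the reference side (Lemmas~\ref{op2_t_well_defined}, \ref{op1_t_well_defined}, with $t = t_r$), since the number of conditions equals the dimension of the space, a quantity invariant under the linear change of variables. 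This simultaneously proves (\ref{pull_back_op2_op1}) and the well-definedness of $\Pi_{\tilde{r},T}^{2,-}$ and $\Pi_{\tilde{r},T}^{1,-}$.

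The main obstacle I anticipate is purely a matter of careful bookkeeping rather than a conceptual hurdle: verifying that every Jacobian factor, every $A$ and $A^{-1}$, and every face/edge scaling constant cancels correctly — in particular checking the precise Piola-type identity for the row-wise $\operatorname{div}$ of a matrix field under $U\mapsto A^{-\top}\hat U A^{\top}$, and confirming that $S_1$ intertwines the two transforms in the way needed so that $\operatorname{div}S_1$ on $T$ corresponds to $\widehat{\operatorname{div}}\,S_1$ on $\hat T$ (this uses that $S_1$ is a fixed linear map on $\mathbb{M}$ and the matrix conjugation structure, and may require expanding $S_1(A\hat W A^{-1})$ in terms of $A S_1(\hat W) A^{-1}$ plus trace corrections). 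A secondary point worth a sentence is the invariance of the variable-order spaces under the affine maps, which relies on the compatibility $\tilde r(\hat e) = \tilde r(e)$ that we have explicitly arranged; one should note that the normal/tangential trace degrees appearing in Definition~\ref{FEM_spaces_single_triangle_variable_order} are preserved because $A$ restricted to a subsimplex is again affine of full rank onto its image.
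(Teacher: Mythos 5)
Your proposal follows essentially the same route as the paper: pull each defining condition back to the reference element under the two affine transforms and invoke Lemmas~\ref{op2_t_well_defined} and \ref{op1_t_well_defined} at $t=t_r$; the paper records only the one non-obvious ingredient, namely the identity
\begin{equation*}
S_{1}\bigl(A\hat{W}A^{-1}\bigr)=A^{-\top}S_{1}(\hat{W})\,A^{\top},
\end{equation*}
which holds exactly (since $\operatorname{tr}(A\hat{W}A^{-1})=\operatorname{tr}\hat{W}$ and $I=A^{-\top}IA^{\top}$) and declares the rest straightforward. Two small corrections to your bookkeeping: the intertwining is \emph{not} of the form $AS_{1}(\hat{W})A^{-1}$ plus trace corrections as you tentatively guessed --- it swaps the covariant conjugation for the contravariant one, which is precisely why the $\operatorname{div}S_{1}$, auxiliary and face conditions of $\Pi_{\tilde{r},T}^{1,-}$ line up with the corresponding conditions of $\Pi_{\tilde{r},T}^{2,-}$ applied to $S_{1}W$; and the row-wise divergence of $A^{-\top}\hat{U}(\hat{\boldsymbol{x}})A^{\top}$ equals $A^{-\top}\widehat{\operatorname{div}}\hat{U}$ with no $(\det A)^{-1}$ factor (harmless here, since every condition is a homogeneous equality to zero).
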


\begin{proof}
For the result of $\Pi_{\tilde{r},T}^{2,-}$, the proof is straightforward. For the result 
of $\Pi_{\tilde{r},T}^{1,-}$, we need utilize the definition of $S_{1}$. Notice that 
\begin{equation}
S_{1}W(\boldsymbol{x})=W(\boldsymbol{x})^{\top}-\text{tr}(W(\boldsymbol{x}))I
=A^{-\top}[\hat{W}(\hat{\boldsymbol{x}})^{\top}-\text{tr}(\hat{W}(\hat{\boldsymbol{x}}))I]A^{\top}
=A^{-\top}S_{1}\hat{W}(\hat{\boldsymbol{x}})A^{\top}.
\label{S1_transform}
\end{equation}
Using (\ref{S1_transform}), it is now straightforward to 
prove the result for $\Pi_{\tilde{r},T}^{1,-}$.
\end{proof}

\bigskip
\begin{lemma}
\label{cd_div_phy}
For any $U\in H^{1}(T;\mathbb{M})$, we have 
\begin{equation*}
\Pi_{\tilde{r},T}^{3}\text{div}\Pi_{\tilde{r},T}^{2,-}U = 
\Pi_{\tilde{r},T}^{3}\text{div}U.
\end{equation*}
Here $\Pi_{\tilde{r},T}^{3}$ is the orthogonal projection operator from 
$L^{2}(T;\mathbb{V})$ onto $\mathcal{P}_{\tilde{r}}\Lambda^{3}(T;\mathbb{V})$.
\end{lemma}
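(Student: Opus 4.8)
The plan is to reduce the identity to an orthogonality statement: it suffices to show that $\text{div}(\Pi_{\tilde{r},T}^{2,-}U-U)$ is $L^{2}(T;\mathbb{V})$-orthogonal to the entire space $\mathcal{P}_{\tilde{r}}\Lambda^{3}(T;\mathbb{V})$, which by Definition \ref{FEM_spaces_single_triangle_variable_order_vector} is just $\mathcal{P}_{\tilde{r}(T)}(T;\mathbb{V})$. Indeed, once this is known, applying the $L^{2}$-orthogonal projection $\Pi_{\tilde{r},T}^{3}$ annihilates that difference, i.e. $\Pi_{\tilde{r},T}^{3}\text{div}(\Pi_{\tilde{r},T}^{2,-}U-U)=0$, which is exactly the claim. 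Write $V:=\Pi_{\tilde{r},T}^{2,-}U-U$; since $U\in H^{1}(T;\mathbb{M})$ and $\Pi_{\tilde{r},T}^{2,-}U$ is a polynomial, $V\in H(\text{div},T;\mathbb{M})$, so $\text{div}\,V\in L^{2}(T;\mathbb{V})$ and the pairings below make sense.

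Next I would split the test space as $\mathcal{P}_{\tilde{r}(T)}(T;\mathbb{V})=\{\text{constant vector fields}\}\oplus\big(\mathcal{P}_{\tilde{r}(T)}(T;\mathbb{V})/\mathbb{R}\big)$, writing an arbitrary $\eta$ as $\eta=\eta_{0}+\boldsymbol{c}$ with $\boldsymbol{c}$ constant and $\eta_{0}$ running over a fixed complement of the constants (representing the quotient). The complement part is immediate from the interior condition \eqref{op2_div_phy} in the definition of $\Pi_{\tilde{r},T}^{2,-}$, which states precisely that $\int_{T}\text{div}\,V\cdot\eta_{0}\,d\boldsymbol{x}=0$ for all such $\eta_{0}$.

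For the constant part, the idea is to integrate by parts row-by-row (recall $V$ is $\mathbb{M}$-valued and $\text{div}$ acts row-wise): applying the divergence theorem to each scalar row and reassembling gives $\int_{T}\text{div}\,V\cdot\boldsymbol{c}\,d\boldsymbol{x}=\int_{\partial T}(V\boldsymbol{n})\cdot\boldsymbol{c}\,ds=\sum_{F\in\triangle_{2}(T)}\int_{F}(V\cdot\boldsymbol{n})\cdot\boldsymbol{c}\,ds$, with $\boldsymbol{n}$ the outward unit normal on each face. Since $\tilde{r}(F)\geq 0$, the constant field $\boldsymbol{c}$ belongs to $\mathcal{P}_{\tilde{r}(F)}(F;\mathbb{V})$, so each face integral vanishes by the face condition \eqref{op2_face_phy}; summing over the four faces yields $\int_{T}\text{div}\,V\cdot\boldsymbol{c}\,d\boldsymbol{x}=0$. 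Combining the two parts gives $\int_{T}\text{div}\,V\cdot\eta\,d\boldsymbol{x}=0$ for all $\eta\in\mathcal{P}_{\tilde{r}(T)}(T;\mathbb{V})$, hence $\Pi_{\tilde{r},T}^{3}\text{div}\,V=0$, which is the lemma.

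I do not expect a genuine obstacle here; this is essentially bookkeeping. The only points requiring mild care are the matrix/vector convention (the divergence theorem must be applied to each row of $V$ separately and then recombined) and the observation that the defining conditions \eqref{op2_div_phy} and \eqref{op2_face_phy} between them already exhaust all of $\mathcal{P}_{\tilde{r}(T)}(T;\mathbb{V})$ — the interior conditions cover everything modulo constant vector fields, and the face conditions supply exactly those constants. Note that neither the auxiliary conditions \eqref{op2_aux_phy}, nor the affine pull-back of Lemma \ref{op2_op1_well_defined}, nor shape-regularity of the mesh, enters this particular argument; well-definedness of $\Pi_{\tilde{r},T}^{2,-}$ is the only earlier fact used.
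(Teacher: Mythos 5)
Your proof is correct and follows essentially the same route as the paper's: both reduce the claim to showing $\int_{T}\mathrm{div}(\Pi_{\tilde{r},T}^{2,-}U-U)\cdot\overline{\eta}\,d\boldsymbol{x}=0$ for all $\overline{\eta}\in\mathcal{P}_{\tilde{r}(T)}(T;\mathbb{V})$, split $\overline{\eta}$ into a constant plus a representative of the quotient, handle the quotient part by the interior condition (\ref{op2_div_phy}) and the constant part by the divergence theorem together with the face condition (\ref{op2_face_phy}). The only cosmetic difference is that the paper first reduces to the case $\Pi_{\tilde{r},T}^{2,-}U=0$ via the projection property, whereas you apply the defining conditions directly to the difference $\Pi_{\tilde{r},T}^{2,-}U-U$; the computation is identical.
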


\begin{proof}
According to the definition of $\Pi_{\tilde{r},T}^{2,-}$, we have 
$(I-\Pi_{\tilde{r},T}^{2,-})\Pi_{\tilde{r},T}^{2,-}U=0$ for 
any $U\in H^{1}(T;\mathbb{M})$. So it is sufficient to show that 
$\Pi_{\tilde{r},T}^{3}\text{div}U=0$ for any $U\in H^{1}(T;\mathbb{M})$ 
with $\Pi_{\tilde{r},T}^{2,-}U=0$.

Now, we choose $U\in H^{1}(T;\mathbb{M})$ with $\Pi_{\tilde{r},T}^{2,-}U=0$. 
We only need to show that $\int_{T}\text{div}U\cdot\overline{\eta}=0$ for 
any $\overline{\eta}\in\mathcal{P}_{\tilde{r}(T)}(T;\mathbb{V})$. Obviously, 
we can choose $\boldsymbol{c}\in\mathbb{R}^{3}$ such that $\overline{\eta}=
\eta+\boldsymbol{c}$, where $\eta\in\mathcal{P}_{\tilde{r}(T)}(T;\mathbb{V})/\mathbb{R}$. 
Then we have 
\begin{equation*}
\int_{T}\text{div}U\cdot\overline{\eta}d\boldsymbol{x} = \int_{T}\text{div}U\cdot\eta d\boldsymbol{x}
+\int_{T}\text{div}U\cdot\boldsymbol{c}d\boldsymbol{x} = \int_{T}\text{div}U\cdot\eta d\boldsymbol{x}
+\int_{\partial T}(U\cdot\boldsymbol{n})\cdot\boldsymbol{c}ds.
\end{equation*}
By (\ref{op2_div_phy}),(\ref{op2_face_phy}) and the face that $\Pi_{\tilde{r},T}^{2,-}U=0$, 
we have $\int_{T}\text{div}U\cdot\overline{\eta}d\boldsymbol{x}=0$.
This implies that $\Pi_{\tilde{r},T}^{3}\text{div}\Pi_{\tilde{r},T}^{2,-}U = 
\Pi_{\tilde{r},T}^{3}\text{div}U$ for any $U\in H^{1}(T;\mathbb{M})$.
\end{proof}

\bigskip
\begin{lemma}
\label{cd_S1_phy}
For any $W\in H^{1}(T;\mathbb{M})$, we have 
\begin{equation*}
\Pi_{\tilde{r},T}^{2,-}S_{1}\Pi_{\tilde{r},T}^{1,-}W = 
\Pi_{\tilde{r},T}^{2,-}S_{1}W.
\end{equation*}
\end{lemma}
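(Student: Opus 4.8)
The plan is to imitate the structure of Lemma~\ref{cd_div_phy}. Since $\Pi_{\tilde{r},T}^{1,-}$ is a projection onto $\mathcal{P}_{\tilde{r}+2}^{-}\Lambda^{1}(T;\mathbb{V})$, we have $(I-\Pi_{\tilde{r},T}^{1,-})\Pi_{\tilde{r},T}^{1,-}W=0$ for every $W\in H^{1}(T;\mathbb{M})$. Hence, setting $W_{0}:=W-\Pi_{\tilde{r},T}^{1,-}W$, it suffices to show that $\Pi_{\tilde{r},T}^{2,-}S_{1}W_{0}=0$ whenever $\Pi_{\tilde{r},T}^{1,-}W_{0}=0$; equivalently, that $\Pi_{\tilde{r},T}^{2,-}S_{1}W=0$ for any $W\in H^{1}(T;\mathbb{M})$ with $\Pi_{\tilde{r},T}^{1,-}W=0$. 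By the definition of $\Pi_{\tilde{r},T}^{2,-}$, this amounts to verifying that $S_{1}W$ annihilates all three families of degrees of freedom (\ref{op2_div_phy}), (\ref{op2_aux_phy}), (\ref{op2_face_phy}) that define $\Pi_{\tilde{r},T}^{2,-}$.

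First I would check the divergence degrees of freedom (\ref{op2_div_phy}): we must show $\int_{T}\text{div}\,S_{1}W\cdot\eta\,d\boldsymbol{x}=0$ for $\eta\in\mathcal{P}_{\tilde{r}(T)}(T;\mathbb{V})/\mathbb{R}$. But $\Pi_{\tilde{r},T}^{1,-}W=0$ means in particular that condition (\ref{op1_div_phy}) holds with $\Pi_{\tilde{r},T}^{1,-}W$ replaced by $0$, i.e. $\int_{T}\text{div}\,S_{1}W\cdot\eta\,d\boldsymbol{x}=0$ for exactly this same test space. So this family is immediate. Next, the auxiliary degrees of freedom (\ref{op2_aux_phy}) require $\int_{T}S_{1}W:[A\hat{\boldsymbol{h}}_{\tilde{r},i}(\hat{\boldsymbol{x}}(\boldsymbol{x}),t_{r})A^{-1}]\,d\boldsymbol{x}=0$ for $1\leq i\leq k$; but condition (\ref{op1_aux_phy}) with $\Pi_{\tilde{r},T}^{1,-}W=0$ gives precisely this. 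So the divergence and auxiliary parts match line for line because the operator $\Pi_{\tilde{r},T}^{1,-}$ was designed with $S_{1}$-weighted versions of exactly the functionals used by $\Pi_{\tilde{r},T}^{2,-}$.

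The remaining task, and the only nontrivial one, is the face degrees of freedom (\ref{op2_face_phy}): we need $\int_{F}[(S_{1}W)\cdot\boldsymbol{n}]\cdot\mu\,ds=0$ for every face $F\in\triangle_{2}(T)$ and every $\mu\in\mathcal{P}_{\tilde{r}(F)}(F;\mathbb{V})$. Here I would invoke Lemma~\ref{tangential_normal_trace}: from $\Pi_{\tilde{r},T}^{1,-}W=0$ we get both (\ref{op1_face_phy}) — namely $\int_{F}[W\cdot\boldsymbol{t}]\cdot\mu\,ds=0$ for all tangential $\boldsymbol{t}$ — and (\ref{op1_edge_phy}), $W\cdot\boldsymbol{t}|_{e}=0$ on every edge. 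One needs to promote these functional conditions to the pointwise statement $W\cdot\boldsymbol{t}|_{F}=0$; this follows because $W$ with $\Pi_{\tilde{r},T}^{1,-}W=0$ lies (by Theorem~4.12 of \cite{AFW:2006:ECH}, as used in the proof of Lemma~\ref{op1_t_well_defined}) in a space whose tangential trace on $F$ is a polynomial of degree $\tilde{r}(F)$, which is then orthogonal to all of $\mathcal{P}_{\tilde{r}(F)}(F;\mathbb{V})$ hence vanishes — so $W$ has vanishing tangential trace on $F$. Lemma~\ref{tangential_normal_trace} then yields $S_{1}W\cdot\boldsymbol{n}|_{F}=0$ pointwise, which certainly implies (\ref{op2_face_phy}).

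The main obstacle is the bookkeeping in this last step: one must be careful that $\Pi_{\tilde{r},T}^{1,-}W=0$ genuinely forces $W\cdot\boldsymbol{t}|_{F}=0$ pointwise, not merely in a weak sense — this requires knowing that the tangential trace of $W$ lives in the right polynomial space so that orthogonality against $\mathcal{P}_{\tilde{r}(F)}(F;\mathbb{V})$ kills it, exactly the argument already deployed inside Lemma~\ref{op1_t_well_defined}. Once that pointwise vanishing is in hand, Lemma~\ref{tangential_normal_trace} does the rest and all three families of degrees of freedom of $\Pi_{\tilde{r},T}^{2,-}$ are satisfied by $S_{1}W$, so $\Pi_{\tilde{r},T}^{2,-}S_{1}W=0$, completing the proof.
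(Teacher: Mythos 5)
Your overall reduction is exactly the paper's: use that $\Pi_{\tilde{r},T}^{1,-}$ is a projection to reduce to showing $\Pi_{\tilde{r},T}^{2,-}S_{1}W=0$ whenever $\Pi_{\tilde{r},T}^{1,-}W=0$, and then check the three families of degrees of freedom of $\Pi_{\tilde{r},T}^{2,-}$; your treatment of (\ref{op2_div_phy}) via (\ref{op1_div_phy}) and of (\ref{op2_aux_phy}) via (\ref{op1_aux_phy}) is word for word what the paper does. The gap is in the face degrees of freedom. You claim that $\Pi_{\tilde{r},T}^{1,-}W=0$ forces $W\cdot\boldsymbol{t}|_{F}=0$ \emph{pointwise}, on the grounds that the tangential trace of $W$ is a polynomial of degree $\tilde{r}(F)$ which is $L^{2}(F)$-orthogonal to $\mathcal{P}_{\tilde{r}(F)}(F;\mathbb{V})$. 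That is false here: $W$ is an arbitrary element of $H^{1}(T;\mathbb{M})$ (in the reduction it is $W-\Pi_{\tilde{r},T}^{1,-}W$, which is not a polynomial), so its tangential trace on $F$ is a general $H^{1/2}$ field, and the weak conditions (\ref{op1_face_phy}) only give orthogonality to polynomial test functions, not vanishing. The argument via Theorem~4.12 of \cite{AFW:2006:ECH} that you are importing from Lemma~\ref{op1_t_well_defined} applies only in the unisolvency setting, where the function being tested is itself taken from the polynomial space $\mathcal{P}_{\tilde{r}+2}^{-}\Lambda^{1}(T;\mathbb{V})$; the kernel of $\Pi_{\tilde{r},T}^{1,-}$ on $H^{1}(T;\mathbb{M})$ contains many non-polynomial fields with nonzero tangential trace. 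Consequently Lemma~\ref{tangential_normal_trace} cannot be invoked, since its hypothesis is precisely the pointwise vanishing you have not established.

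The paper closes this step without any pointwise claim: it verifies the purely algebraic identity
\begin{equation*}
[S_{1}W\cdot\boldsymbol{n}]\cdot\mu
=\Bigl(W\cdot(n_{2},-n_{1},0)^{\top}\Bigr)\cdot(-\mu_{2},\mu_{1},0)^{\top}
+\Bigl(W\cdot(n_{3},0,-n_{1})^{\top}\Bigr)\cdot(-\mu_{3},0,\mu_{1})^{\top}
+\Bigl(W\cdot(0,n_{3},-n_{2})^{\top}\Bigr)\cdot(0,-\mu_{3},\mu_{2})^{\top},
\end{equation*}
in which each vector $(n_{2},-n_{1},0)^{\top}$, etc., is tangential to $F$ and each rearranged $\mu$-vector still lies in $\mathcal{P}_{\tilde{r}(F)}(F;\mathbb{V})$. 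Integrating this identity over $F$ and applying the weak conditions (\ref{op1_face_phy}) term by term gives $\int_{F}[S_{1}W\cdot\boldsymbol{n}]\cdot\mu\,ds=0$ directly. So the correct mechanism is the same algebra that underlies Lemma~\ref{tangential_normal_trace}, but deployed under the integral sign against polynomial weights rather than as a pointwise trace statement; with that substitution your proof goes through.
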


\begin{proof}
According to the definition of $\Pi_{\tilde{r},T}^{1,-}$, we have 
\begin{equation*}
(I-\Pi_{\tilde{r},T}^{1,-})\Pi_{\tilde{r},T}^{1,-}W=0,\quad W\in H^{1}(T;\mathbb{M}). 
\end{equation*}
So it is sufficient to show that 
$\Pi_{\tilde{r},T}^{2,-}S_{1}W=0$ for any $W\in H^{1}(T;\mathbb{M})$ 
with $\Pi_{\tilde{r},T}^{1,-}W=0$.

Now, we choose $W\in H^{1}(T;\mathbb{M})$ with $\Pi_{\tilde{r},T}^{1,-}W=0$.
By (\ref{op1_div_phy}) and (\ref{op1_aux_phy}), we have, 
\begin{equation*}
\int_{T}\text{div}S_{1}W\cdot\eta d\boldsymbol{x}=0,
\quad\eta\in\mathcal{P}_{\tilde{r}(T)}(T;\mathbb{V})/\mathbb{R};
\end{equation*}
\begin{equation*}
\int_{T}S_{1}W:[A\hat{\boldsymbol{h}}_{\tilde{r},i}(\hat{\boldsymbol{x}},t_{r})A^{-1}]
d\boldsymbol{x}=0,\quad 1\leq i\leq k.
\end{equation*} 

In order to demonstrate that $\Pi_{\tilde{r},T}^{2,-}S_{1}W=0$, we only need to show that
\begin{equation*}
\int_{F}[S_{1}W\cdot\boldsymbol{n}]\cdot
\mu ds=0, \quad F\in\triangle_{2}(T),\quad
\mu\in\mathcal{P}_{\tilde{r}(F)}(F;\mathbb{V}). 
\end{equation*} 

According to the definition of $S_{1}$, we have,
\begin{align*}
S_{1}W\cdot\boldsymbol{n}= &
\left[\begin{array}{ccc}
-w_{22}-w_{33} & w_{21} & w_{31} \\
w_{12} & -w_{11}-w_{33} & w_{32} \\
w_{13} & w_{23} & -w_{11}-w_{22}
\end{array}\right]
\cdot\boldsymbol{n} \\
= & \left[\begin{array}{c}
(n_{2}w_{21}-n_{1}w_{22})+(n_{3}w_{31}-n_{1}w_{33})\\
-(n_{2}w_{11}-n_{1}w_{12})+(n_{3}w_{32}-n_{2}w_{33})\\
-(n_{3}w_{11}-n_{1}w_{13})-(n_{3}w_{22}-n_{2}w_{23})
\end{array}\right].
\end{align*}

Consequently, for any $\mu\in\mathcal{P}_{\tilde{r}(F)}(F;\mathbb{V})$, we have, 
\begin{align*}
[S_{1}W\cdot\boldsymbol{n}]\cdot\mu = & 
\left(W\cdot\left[\begin{array}{c}
n_{2}\\-n_{1}\\0
\end{array}\right]\right)\cdot
\left[\begin{array}{c}
-\mu_{2}\\ \mu_{1}\\ 0
\end{array}\right]\\
& \quad + \left(W\cdot\left[\begin{array}{c}
n_{3}\\0\\-n_{1}
\end{array}\right]\right)\cdot
\left[\begin{array}{c}
-\mu_{3}\\ 0\\ \mu_{1}
\end{array}\right]
+\left(W\cdot\left[\begin{array}{c}
0 \\ n_{3}\\ -n_{2}
\end{array}\right]\right)\cdot
\left[\begin{array}{c}
0 \\ -\mu_{3}\\ \mu_{2}
\end{array}\right].
\end{align*}

By (\ref{op1_face_phy}) and the fact that $\Pi_{\tilde{r},T}^{1,-}W=0$, 
we conclude that 
\begin{equation*}
\int_{F}[S_{1}W\cdot\boldsymbol{n}]\cdot
\mu ds=0, \quad F\in\triangle_{2}(T),\quad 
\mu\in\mathcal{P}_{\tilde{r}(F)}(F;\mathbb{V}). 
\end{equation*} 
Consequently, $\Pi_{\tilde{r},T}^{2,-}S_{1}W=0$.
\end{proof}

\bigskip
\begin{lemma}
\label{op2_op1_bound_phy}
There exists $c>0$ such that, for any $U,W\in H^{1}(T;\mathbb{M})$, 
\begin{equation}
\Vert\Pi_{\tilde{r},T}^{2,-}U\Vert_{L^{2}(T;\mathbb{M})}\leq c\Vert U\Vert_{H^{1}(T;\mathbb{M})};
\label{op2_bound_phy}
\end{equation}
\begin{equation}
\Vert\text{curl}\Pi_{\tilde{r},T}^{1,-}W\Vert_{L^{2}(T;\mathbb{V})}\leq c
(h_{T}^{-1}\Vert W\Vert_{L^{2}(T;\mathbb{M})}+\Vert W\Vert_{H^{1}(T;\mathbb{M})}).
\label{op1_bound_phy}
\end{equation}
Here $h_{T}$ is the outer diameter of $T$, and $c$ is independent of $T$. The constant $c$ 
may depend upon the ratio of outer and inner diameters of $T$.
\end{lemma}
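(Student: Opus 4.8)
\medskip
\noindent\emph{Proof strategy.}
The plan is to reduce both estimates to the fixed reference tetrahedron $\hat{T}$ of Section~5.1 by means of the pull-back identities~(\ref{pull_back_op2_op1}) of Lemma~\ref{op2_op1_well_defined}, prove the corresponding bounds on $\hat{T}$ by a pure dimension-counting argument, and then transport them to the physical element $T$ by tracking the scaling constants of the affine map~(\ref{affine_transform}).

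\emph{Reference estimates.}
First I would show that, for every order distribution $\tilde{r}$ on $\triangle(\hat{T})$ bounded by the uniform order $r_{\max}$, the operators $\Pi_{\tilde{r},\hat{T}}^{2,-}$ and $\Pi_{\tilde{r},\hat{T}}^{1,-}$ of Definition~\ref{operators_reference} are bounded linear maps from $H^{1}(\hat{T};\mathbb{M})$ into their finite-dimensional ranges, equipped with any norm. Indeed, by Lemma~\ref{op2_t_well_defined} and Lemma~\ref{op1_t_well_defined} these operators are obtained by solving a fixed nonsingular linear system whose right-hand side entries are bounded linear functionals of the data: the interior moments in~(\ref{op1_t_div}),(\ref{op1_t_aux}) are controlled by $\Vert\hat{W}\Vert_{H^{1}(\hat{T};\mathbb{M})}$ directly (note that $\text{div}\,S_{1}\hat{W}\in L^{2}$), the face moments in~(\ref{op1_t_face}) by the trace theorem, and the edge conditions~(\ref{op1_t_edge}) contribute a zero right-hand side; the conditions defining $\Pi_{\tilde{r},\hat{T}}^{2,-}$ are handled the same way. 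Equivalence of norms on a finite-dimensional space then gives $\Vert\Pi_{\tilde{r},\hat{T}}^{2,-}\hat{U}\Vert_{L^{2}(\hat{T};\mathbb{M})}\le C_{\hat{T}}\Vert\hat{U}\Vert_{H^{1}(\hat{T};\mathbb{M})}$ and $\Vert\text{curl}_{\hat{\boldsymbol{x}}}\Pi_{\tilde{r},\hat{T}}^{1,-}\hat{W}\Vert_{L^{2}(\hat{T};\mathbb{V})}\le C_{\hat{T}}\Vert\hat{W}\Vert_{H^{1}(\hat{T};\mathbb{M})}$. Since there are only finitely many distributions $\tilde{r}:\triangle(\hat{T})\to\{0,\dots,r_{\max}\}$ (and finitely many associated parameters $t_{r}$), $C_{\hat{T}}$ may be chosen independent of $\tilde{r}$.

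\emph{Scaling for $\Pi_{\tilde{r},T}^{2,-}$.}
For a shape-regular $T$ the matrix $A$ in~(\ref{affine_transform}) satisfies $\Vert A\Vert\sim h_{T}$, $\Vert A^{-1}\Vert\sim h_{T}^{-1}$, $|\det A|\sim h_{T}^{3}$, with hidden constants depending only on $\hat{T}$ and the shape-regularity ratio; in particular $\Vert A\Vert\,\Vert A^{-1}\Vert\lesssim 1$, so the matrix conjugations $M\mapsto A^{-\top}MA^{\top}$ and $M\mapsto AMA^{-1}$ change the pointwise Frobenius norm by a bounded factor, while a change of variables changes the $L^{2}$-norm by $|\det A|^{\pm 1/2}$. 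Combining this with $U(\boldsymbol{x})=A^{-\top}\hat{U}(\hat{\boldsymbol{x}})A^{\top}$ and the chain rule gives $\Vert\hat{U}\Vert_{H^{1}(\hat{T};\mathbb{M})}\lesssim|\det A|^{-1/2}\bigl(\Vert U\Vert_{L^{2}(T;\mathbb{M})}+\Vert A\Vert\,\Vert U\Vert_{H^{1}(T;\mathbb{M})}\bigr)$, whence, using~(\ref{pull_back_op2_op1}) and the reference estimate, $\Vert\Pi_{\tilde{r},T}^{2,-}U\Vert_{L^{2}(T;\mathbb{M})}\lesssim|\det A|^{1/2}\Vert\Pi_{\tilde{r},\hat{T}}^{2,-}\hat{U}\Vert_{L^{2}(\hat{T};\mathbb{M})}\lesssim\Vert U\Vert_{L^{2}(T;\mathbb{M})}+\Vert A\Vert\,\Vert U\Vert_{H^{1}(T;\mathbb{M})}$. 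Since $\Omega$ is bounded, $h_{T}\lesssim 1$, so the factor $\Vert A\Vert$ is absorbed into the constant, which proves~(\ref{op2_bound_phy}).

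\emph{Scaling for $\text{curl}\,\Pi_{\tilde{r},T}^{1,-}$, and the main obstacle.}
The crux is the transformation rule for the curl of the matrix field $\Pi_{\tilde{r},T}^{1,-}W$. Reading row $i$ of $\Pi_{\tilde{r},T}^{1,-}W(\boldsymbol{x})=A\,\Pi_{\tilde{r},\hat{T}}^{1,-}\hat{W}(\hat{\boldsymbol{x}})A^{-1}$ one checks it is an $A$-linear combination of the covariant (Piola) pull-backs $A^{-\top}\hat{w}_{k}(\hat{\boldsymbol{x}})$ of the rows $\hat{w}_{k}$ of $\Pi_{\tilde{r},\hat{T}}^{1,-}\hat{W}$; invoking the classical identity $\text{curl}_{\boldsymbol{x}}\bigl(A^{-\top}\hat{w}(\hat{\boldsymbol{x}})\bigr)=\tfrac{1}{\det A}A\,\text{curl}_{\hat{\boldsymbol{x}}}\hat{w}(\hat{\boldsymbol{x}})$ row-wise yields $\text{curl}_{\boldsymbol{x}}\Pi_{\tilde{r},T}^{1,-}W=\tfrac{1}{\det A}A\bigl(\text{curl}_{\hat{\boldsymbol{x}}}\Pi_{\tilde{r},\hat{T}}^{1,-}\hat{W}\bigr)A^{\top}$. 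A change of variables then gives $\Vert\text{curl}_{\boldsymbol{x}}\Pi_{\tilde{r},T}^{1,-}W\Vert_{L^{2}(T;\mathbb{V})}\lesssim|\det A|^{-1/2}\Vert A\Vert^{2}\Vert\text{curl}_{\hat{\boldsymbol{x}}}\Pi_{\tilde{r},\hat{T}}^{1,-}\hat{W}\Vert_{L^{2}(\hat{T};\mathbb{V})}$, and combining with the reference estimate and $\Vert\hat{W}\Vert_{H^{1}(\hat{T};\mathbb{M})}\lesssim|\det A|^{-1/2}\bigl(\Vert W\Vert_{L^{2}(T;\mathbb{M})}+\Vert A\Vert\,\Vert W\Vert_{H^{1}(T;\mathbb{M})}\bigr)$ (from $W(\boldsymbol{x})=A\hat{W}(\hat{\boldsymbol{x}})A^{-1}$) gives $\Vert\text{curl}_{\boldsymbol{x}}\Pi_{\tilde{r},T}^{1,-}W\Vert_{L^{2}(T;\mathbb{V})}\lesssim|\det A|^{-1}\Vert A\Vert^{2}\Vert W\Vert_{L^{2}(T;\mathbb{M})}+|\det A|^{-1}\Vert A\Vert^{3}\Vert W\Vert_{H^{1}(T;\mathbb{M})}$. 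Since $|\det A|^{-1}\Vert A\Vert^{2}\sim h_{T}^{-1}$ and $|\det A|^{-1}\Vert A\Vert^{3}\sim 1$, this is precisely~(\ref{op1_bound_phy}). I expect this last step to be the main obstacle: one must be careful that $\Pi_{\tilde{r},T}^{1,-}W$ transforms by a genuinely covariant rule (so that $\text{curl}$ picks up exactly one power of $\det(A)^{-1}$) and that the $L^{2}$ part of $\hat{W}$ scales \emph{without} an accompanying factor $\Vert A\Vert$, which is exactly what makes the two terms collapse to the asserted $h_{T}^{-1}$-weighted estimate rather than to a cruder or simply $H^{1}$-type bound.
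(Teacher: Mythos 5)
Your argument is correct and is precisely the ``standard scaling technique'' that the paper's own proof invokes by citing Lemmas 20 and 29 of \cite{QD:2009:MMEW}: boundedness of the reference-element operators as solutions of a fixed nonsingular system with $H^1$-bounded right-hand sides, followed by the affine scaling of (\ref{pull_back_op2_op1}), including the covariant transformation of the row-wise curl that produces exactly the $h_T^{-1}$ weight on the $L^2$ term. No gaps; your write-up simply supplies the details the paper leaves to the cited reference.
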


\begin{proof}
(\ref{op2_bound_phy},\ref{op1_bound_phy}) are obtained by standard scaling techniques.
The proof for (\ref{op2_bound_phy}) is the same as that 
for Lemma $20$ in \cite{QD:2009:MMEW}. 
The proof for (\ref{op1_bound_phy}) is the same as that 
for Lemma $29$ in \cite{QD:2009:MMEW}.
\end{proof}

\subsection{Projection operators on tetrahedral meshes}
As we stated at the beginning of this section, we use standard assumptions for 
regular meshes.
This means that the ratio between outer diameter and inner diameter of any tetrahedron 
in any 
mesh has a uniform upper bound.
We are going to extend operators $\Pi_{\tilde{r},T}^{2,-}$ 
and $\Pi_{\tilde{r},T}^{1,-}$ now to the 
whole mesh $\mathcal{T}_{h}$ in such a way that 
they make (\ref{commuting_diagram2},\ref{commuting_diagram3}) 
commute.

\begin{definition}
\label{op2_op1_global}
We define mappings $\Pi_{\tilde{r},h}^{2,-}:H^{1}(\Omega;\mathbb{M})\rightarrow
\mathcal{P}_{\tilde{r}+1}^{-}\Lambda^{2}(\mathcal{T}_{h};\mathbb{V})$ and 
$\Pi_{\tilde{r},h}^{1,-}:H^{1}(\Omega;\mathbb{M})\rightarrow\mathcal{P}_{\tilde{r}+2}^{-}
\Lambda^{1}(\mathcal{T}_{h};\mathbb{V})$ by
\begin{equation*}
(\Pi_{\tilde{r},h}^{2,-}U)|_{T}=\Pi_{\tilde{r},T}^{2,-}(U|_{T});\quad
(\Pi_{\tilde{r},h}^{1,-}W)|_{T}=\Pi_{\tilde{r},T}^{1,-}(W|_{T}).
\end{equation*}
Here $T\in\triangle_{3}(\mathcal{T}_{h})$, and $U,W\in H^{1}(\Omega;\mathbb{M})$.
\end{definition}

\bigskip
\begin{lemma}
For any $U,W\in H^{1}(\Omega;\mathbb{M})$, $\Pi_{\tilde{r},h}^{2,-}U\in
\mathcal{P}_{\tilde{r}+1}^{-}\Lambda^{2}(\mathcal{T}_{h};\mathbb{V})$ 
and $\Pi_{\tilde{r},h}^{1,-}W\in\mathcal{P}_{\tilde{r}+2}^{-}\Lambda^{1}
(\mathcal{T}_{h};\mathbb{V})$. And we have 
\begin{equation}
\Pi_{\tilde{r},h}^{3}\text{div}\Pi_{\tilde{r},h}^{2,-}U = \Pi_{\tilde{r},h}^{3}\text{div}U;\quad
\Pi_{\tilde{r},h}^{2,-}S_{1}\Pi_{\tilde{r},h}^{1,-}W = \Pi_{\tilde{r},h}^{2,-}S_{1}W.
\label{cd_div_S1_global}
\end{equation}

And there exists a constant $c>0$, which is independent of $\mathcal{T}_{h},U,W$ , so that 
\begin{equation}
\Vert\Pi_{\tilde{r},h}^{2,-}U\Vert_{L^{2}(\Omega;\mathbb{M})}\leq c\Vert U\Vert_{H^{1}(\Omega;\mathbb{M})};
\label{op2_bound_global}
\end{equation}
\begin{equation}
\Vert\text{curl}\Pi_{\tilde{r},h}^{1,-}W|_{T}\Vert_{L^{2}(T;\mathbb{V})}\leq c
(h_{T}^{-1}\Vert W|_{T}\Vert_{L^{2}(T;\mathbb{M})}+\Vert W|_{T}\Vert_{H^{1}(T;\mathbb{M})}).
\label{op1_bound_global}
\end{equation}
Here $T\in\triangle_{3}(\mathcal{T}_{h})$, and $h_{T}$ is the outer diameter of $T$.
\end{lemma}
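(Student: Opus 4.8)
The plan is to assemble the global statement from the per-element results already established, exploiting that all pieces are defined element-by-element and that shape regularity gives uniform control. First I would verify the conformity claims: $\Pi_{\tilde r,h}^{2,-}U\in\mathcal{P}_{\tilde r+1}^{-}\Lambda^{2}(\mathcal{T}_{h};\mathbb{V})$ requires that the normal component match across interelement faces, and $\Pi_{\tilde r,h}^{1,-}W\in\mathcal{P}_{\tilde r+2}^{-}\Lambda^{1}(\mathcal{T}_{h};\mathbb{V})$ requires matching of tangential traces on faces and edges. Since $U,W\in H^{1}(\Omega;\mathbb{M})$ have single-valued traces, and the face/edge degrees of freedom (\ref{op2_face_phy}), (\ref{op1_face_phy}), (\ref{op1_edge_phy}) fixing $\Pi_{\tilde r,T}^{2,-}U\cdot\boldsymbol{n}|_{F}$ and $\Pi_{\tilde r,T}^{1,-}W\cdot\boldsymbol{t}|_{F}$, $\Pi_{\tilde r,T}^{1,-}W\cdot\boldsymbol{t}|_{e}$ depend only on data restricted to $F$ (resp.\ $e$) together with $\tilde r(F)$, $\tilde r(e)$, which are shared by the two adjacent elements, the traces from either side agree. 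Hence the piecewise-defined fields land in the conforming global spaces.

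Next, the two commuting identities in (\ref{cd_div_S1_global}) follow immediately from their local counterparts Lemma~\ref{cd_div_phy} and Lemma~\ref{cd_S1_phy}. On each $T$ one has $\Pi_{\tilde r,T}^{3}\,\text{div}\,\Pi_{\tilde r,T}^{2,-}(U|_{T})=\Pi_{\tilde r,T}^{3}\,\text{div}(U|_{T})$ and $\Pi_{\tilde r,T}^{2,-}S_{1}\Pi_{\tilde r,T}^{1,-}(W|_{T})=\Pi_{\tilde r,T}^{2,-}S_{1}(W|_{T})$; since $\Pi_{\tilde r,h}^{3}$ is the global $L^{2}$ projection onto a space whose restriction to $T$ is exactly $\mathcal{P}_{\tilde r}\Lambda^{3}(T;\mathbb{V})$, its action commutes with restriction to $T$, and likewise $\Pi_{\tilde r,h}^{2,-}$ restricts to $\Pi_{\tilde r,T}^{2,-}$ by construction. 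Gluing the identities over all $T\in\triangle_{3}(\mathcal{T}_{h})$ gives the global statements; one should note in passing that both sides of each identity are already known to be conforming (the right-hand sides because $\text{div}\,U$, $S_{1}W$ are single-valued and the projections land in conforming spaces, the left-hand sides by the previous paragraph), so the element-wise equality genuinely defines an equality of global fields.

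For the bounds, (\ref{op1_bound_global}) is nothing but the local estimate (\ref{op1_bound_phy}) of Lemma~\ref{op2_op1_bound_phy} applied on each $T$, with the constant $c$ depending only on the shape-regularity ratio, which is uniformly bounded by assumption; there is nothing to sum. For (\ref{op2_bound_global}) I would square the local estimate (\ref{op2_bound_phy}), $\Vert\Pi_{\tilde r,T}^{2,-}(U|_{T})\Vert_{L^{2}(T;\mathbb{M})}^{2}\leq c^{2}\Vert U|_{T}\Vert_{H^{1}(T;\mathbb{M})}^{2}$, sum over $T\in\triangle_{3}(\mathcal{T}_{h})$, and use that the $L^{2}$ norm and the $H^{1}$ norm over $\Omega$ are the sums of the corresponding squared norms over the elements (the global $H^{1}$ norm dominates the sum of local ones since $U\in H^{1}(\Omega)$), then take square roots. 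The uniformity of $c$ across the mesh is exactly what Lemma~\ref{op2_op1_bound_phy} provides via the scaling argument, so the resulting global constant is mesh-independent.

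The assembly itself is routine; the only point requiring genuine care is the conformity verification in the first paragraph, i.e.\ checking that the chosen degrees of freedom really depend only on interface data and the shared polynomial orders $\tilde r(F)$, $\tilde r(e)$ — this is where the compatibility condition on $\tilde r$ (that $\tilde r(e)\leq\tilde r(f)$ when $e\subset f$, and that $\tilde r$ is single-valued on shared subsimplices) is used, and it is the analogue of the standard argument showing the classical BDM/Nédélec projections preserve conformity. The rest is bookkeeping over elements.
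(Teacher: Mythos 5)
Your proposal is correct and follows essentially the same route as the paper, which simply cites the definitions of the global operators together with Lemmas \ref{cd_div_phy}, \ref{cd_S1_phy} and \ref{op2_op1_bound_phy}; you merely spell out the interelement conformity check and the summation over elements that the paper leaves implicit. No gaps.
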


\begin{proof}
This is by definitions of $\Pi_{\tilde{r},h}^{2,-}$ and $\Pi_{\tilde{r},h}^{1,-}$, 
Lemma \ref{cd_div_phy}, 
Lemma \ref{cd_S1_phy}, and Lemma \ref{op2_op1_bound_phy}.
\end{proof}

\bigskip
To remove the $h_{T}^{-1}$ factor in (\ref{op1_bound_global}), we introduce a Clement-type 
interpolant $R_{h}$ mapping $H^{1}(\Omega;\mathbb{M})$ into continuous piece-wise linear 
$M$-valued function on $\mathcal{T}_{h}$ (The operator $\Pi_{h}^{0}$ in Theorem $5.1$ of 
\cite{CB:1989:OIC}, using example $1$ in \cite{CB:1989:OIC}). Then there exists 
a constant $c>0$ 
such that $\forall W\in H^{1}(\Omega;\mathbb{M}),T\in\mathcal{T}_{h}$, we have that
\[
\Vert W-R_{h}W\Vert_{L^{2}(T;\mathbb{M})}\leq ch_{T}\Vert W\Vert_{H^{1}(T;\mathbb{M})};
\Vert W-R_{h}W\Vert_{H^{1}(T;\mathbb{M})}\leq c\Vert W\Vert_{H^{1}(\Sigma_{T};\mathbb{M})}.
\]
Here $\Sigma_{T}:=\bigcup_{T^{'}\in\mathcal{T}_{h}:T^{'}\cap T\neq\emptyset}T^{'}$.
Then we follow \cite{AW:2002:MME} and define $\overline{\Pi}_{\tilde{r},h}^{1,-}:=
\Pi_{\tilde{r},h}^{1,-}(I-R_{h})+R_{h}$.

\begin{lemma}
\label{overline_op1_global}
$\overline{\Pi}_{\tilde{r},h}^{1,-}$ maps from $H^{1}(\Omega;\mathbb{M})$ into 
$\mathcal{P}_{\tilde{r}+2}^{-}\Lambda^{1}(\mathcal{T}_{h};\mathbb{V})$.
\begin{equation}
\Pi_{\tilde{r},h}^{2,-}S_{1}\overline{\Pi}_{\tilde{r},h}^{1,-}W = 
\Pi_{\tilde{r},h}^{2,-}S_{1}W,\quad W\in H^{1}(\Omega;\mathbb{M}).
\label{cd_S1_global_overline}
\end{equation} 

And there exists a constant $c>0$ such that for any $W\in H^{1}(\Omega;\mathbb{M})$,
\begin{equation}
\Vert\text{curl}\overline{\Pi}_{\tilde{r},h}^{1,-}W\Vert_{L^{2}(\Omega;\mathbb{V})}
\leq c\Vert W\Vert_{H^{1}(\Omega;\mathbb{M})}.
\label{op1_bound_global_overline}
\end{equation} 
\end{lemma}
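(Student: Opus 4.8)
The plan is to verify the three claims of Lemma~\ref{overline_op1_global} in turn, using the decomposition $\overline{\Pi}_{\tilde{r},h}^{1,-} = \Pi_{\tilde{r},h}^{1,-}(I-R_h) + R_h$ and the properties already established for $\Pi_{\tilde{r},h}^{1,-}$, $\Pi_{\tilde{r},h}^{2,-}$, and $R_h$. First, for the mapping property: $R_h W$ is a continuous piecewise-linear $\mathbb{M}$-valued function, hence $R_h W \in \mathcal{P}_{1}(\mathcal{T}_h;\mathbb{M})$; since each edge order $\tilde{r}(e) \geq 0$, we have $\mathcal{P}_{1}(\mathcal{T}_h;\mathbb{M}) \subset \mathcal{P}_{\tilde{r}+2}^{-}\Lambda^{1}(\mathcal{T}_h;\mathbb{V})$ (a continuous linear field has tangential traces of degree $1 \leq \tilde{r}(e)+1$ on edges and lies in the lowest-order N\'ed\'elec-type space on each face), while $(I-R_h)W \in H^1(\Omega;\mathbb{M})$, so $\Pi_{\tilde{r},h}^{1,-}(I-R_h)W \in \mathcal{P}_{\tilde{r}+2}^{-}\Lambda^{1}(\mathcal{T}_h;\mathbb{V})$ by the previous lemma; the sum lies in the space. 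I should double-check that continuous piecewise-linears land in the conforming space $\mathcal{P}_{\tilde{r}+2}^{-}\Lambda^{1}(\mathcal{T}_h;\mathbb{V}) \subset H(\text{curl},\Omega;\mathbb{M})$, which is immediate since continuity implies $H(\text{curl})$-conformity and on each element a linear field sits in $\mathcal{P}_{\tilde{r}+2}^{-}\Lambda^{1}(T;\mathbb{V})$.

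Second, for the commutativity~(\ref{cd_S1_global_overline}): compute
\begin{equation*}
\Pi_{\tilde{r},h}^{2,-}S_{1}\overline{\Pi}_{\tilde{r},h}^{1,-}W
= \Pi_{\tilde{r},h}^{2,-}S_{1}\Pi_{\tilde{r},h}^{1,-}(I-R_h)W + \Pi_{\tilde{r},h}^{2,-}S_{1}R_h W.
\end{equation*}
By the second identity in~(\ref{cd_div_S1_global}) applied to $(I-R_h)W \in H^1(\Omega;\mathbb{M})$, the first term equals $\Pi_{\tilde{r},h}^{2,-}S_{1}(I-R_h)W$. Adding $\Pi_{\tilde{r},h}^{2,-}S_{1}R_h W$ and using linearity of $S_1$ and of $\Pi_{\tilde{r},h}^{2,-}$ gives $\Pi_{\tilde{r},h}^{2,-}S_{1}W$, as desired. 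This step is essentially formal once one observes $R_h W \in H^1(\Omega;\mathbb{M})$ so that $S_1 R_h W$ and $\Pi_{\tilde{r},h}^{2,-}S_1 R_h W$ make sense and the linearity splitting is legitimate.

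Third, for the stability bound~(\ref{op1_bound_global_overline}): on each element $T$ write $\operatorname{curl}\overline{\Pi}_{\tilde{r},h}^{1,-}W = \operatorname{curl}\Pi_{\tilde{r},T}^{1,-}((I-R_h)W) + \operatorname{curl}R_h W$. Apply~(\ref{op1_bound_global}) to $(I-R_h)W$ to get
\begin{equation*}
\Vert\operatorname{curl}\Pi_{\tilde{r},T}^{1,-}(I-R_h)W\Vert_{L^2(T;\mathbb{V})} \leq c\left(h_T^{-1}\Vert(I-R_h)W\Vert_{L^2(T;\mathbb{M})} + \Vert(I-R_h)W\Vert_{H^1(T;\mathbb{M})}\right),
\end{equation*}
then invoke the Cl\'ement estimates: $h_T^{-1}\Vert(I-R_h)W\Vert_{L^2(T;\mathbb{M})} \leq c\Vert W\Vert_{H^1(T;\mathbb{M})}$ and $\Vert(I-R_h)W\Vert_{H^1(T;\mathbb{M})} \leq c\Vert W\Vert_{H^1(\Sigma_T;\mathbb{M})}$. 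For the $\operatorname{curl}R_h W$ term use an inverse estimate on the piecewise-linear $R_h W$ together with the $L^2$-stability of $R_h$ (or directly $\Vert\operatorname{curl}R_h W\Vert_{L^2(T)} \leq c h_T^{-1}\Vert R_h W\Vert_{L^2(T)} \leq c h_T^{-1}\Vert W\Vert_{L^2(\Sigma_T)}$, which is too weak) — better, bound it via $\Vert\operatorname{curl}R_h W\Vert_{L^2(T)} = \Vert\operatorname{curl}(R_h W - \bar W_T)\Vert_{L^2(T)} \leq c h_T^{-1}\Vert R_h W - \bar W_T\Vert_{L^2(T)}$ for a suitable constant matrix $\bar W_T$ on $\Sigma_T$, then $\Vert R_h W - \bar W_T\Vert_{L^2(T)} \leq \Vert R_h W - W\Vert_{L^2(T)} + \Vert W - \bar W_T\Vert_{L^2(T)} \leq c h_T\Vert W\Vert_{H^1(\Sigma_T)}$ by the Cl\'ement bound and Poincar\'e. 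Squaring, summing over $T \in \mathcal{T}_h$, and using the finite-overlap property of the patches $\Sigma_T$ (bounded by shape regularity) yields~(\ref{op1_bound_global_overline}). The main obstacle is the $\operatorname{curl}R_h W$ term: one must absorb the stray $h_T^{-1}$ by pairing it with an $O(h_T)$ approximation estimate, so the argument hinges on choosing the right local constant to subtract and on the finite-overlap of $\{\Sigma_T\}$; everything else is bookkeeping with the already-established lemmas.
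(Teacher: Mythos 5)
Your proposal is correct and follows exactly the route the paper intends: the paper dismisses the proof as ``straightforward'' given the definition $\overline{\Pi}_{\tilde{r},h}^{1,-}=\Pi_{\tilde{r},h}^{1,-}(I-R_{h})+R_{h}$, and your write-up supplies precisely the missing details (the inclusion of continuous piecewise linears in $\mathcal{P}_{\tilde{r}+2}^{-}\Lambda^{1}(\mathcal{T}_{h};\mathbb{V})$, the linearity splitting for the commutativity, and the cancellation of $h_{T}^{-1}$ via the Cl\'ement estimates plus finite overlap of the patches $\Sigma_{T}$). The only simplification available is that $\Vert\text{curl}\,R_{h}W\Vert_{L^{2}(T)}\leq\Vert\nabla R_{h}W\Vert_{L^{2}(T)}$ can be bounded directly by the $H^{1}$ Cl\'ement estimate without the inverse-estimate-plus-constant-subtraction detour, but your version is also valid.
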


\begin{proof}
Since $R_{h}$ maps $H^{1}(\Omega;\mathbb{M})$ into continuous piece-wise linear 
$M$-valued function on $\mathcal{T}_{h}$, we have $\overline{\Pi}_{\tilde{r},h}^{1,-}$ 
maps from $H^{1}(\Omega;\mathbb{M})$ into 
$\mathcal{P}_{\tilde{r}+2}^{-}\Lambda^{1}(\mathcal{T}_{h};\mathbb{V})$.
The proof for (\ref{cd_S1_global_overline},\ref{op1_bound_global_overline}) is straightforward.
\end{proof}

\section{Stability of the finite element discretization}

We will use the following well-known result from partial differential
equations, see \cite{GR:1986:FENS}.

\begin{lemma}
\label{PDE_lemma}Let $\Omega$ be a bounded domain in $\mathbb{R}^{3}$ with 
a Lipschitz boundary. Then, for all $\mu\in L^{2}(\Omega)$, there exists 
$\eta\in H^{1}(\Omega;\mathbb{V})$ satisfying $\text{div}\eta=\mu$. If, in 
addition, $\int_{\Omega}\mu d\boldsymbol{x}=0$, then we can choose 
$\eta\in\mathring{H}^{1}(\Omega;\mathbb{V})$.
\end{lemma}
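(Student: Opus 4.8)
The plan is to reduce the general case to the mean-zero case and then invoke the classical solvability of the divergence equation with homogeneous boundary data. First I would dispose of the non-homogeneous part: set $c=\tfrac{1}{3|\Omega|}\int_{\Omega}\mu\,d\boldsymbol{x}$ and $\eta_{0}(\boldsymbol{x})=c\,\boldsymbol{x}$, so that $\eta_{0}\in H^{1}(\Omega;\mathbb{V})$ (indeed smooth) and $\text{div}\,\eta_{0}=3c$, whence $\int_{\Omega}(\mu-\text{div}\,\eta_{0})\,d\boldsymbol{x}=0$. If I can produce $\tilde{\eta}\in\mathring{H}^{1}(\Omega;\mathbb{V})$ with $\text{div}\,\tilde{\eta}=\mu-\text{div}\,\eta_{0}$, then $\eta:=\eta_{0}+\tilde{\eta}$ settles the first claim; and when $\int_{\Omega}\mu\,d\boldsymbol{x}=0$ we have $c=0$, $\eta_{0}=0$, so $\eta=\tilde{\eta}\in\mathring{H}^{1}(\Omega;\mathbb{V})$, which is the refined claim. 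Everything thus reduces to: for $g\in L^{2}(\Omega)$ with $\int_{\Omega}g\,d\boldsymbol{x}=0$, find $\tilde{\eta}\in\mathring{H}^{1}(\Omega;\mathbb{V})$ with $\text{div}\,\tilde{\eta}=g$.

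For this core statement I would take the functional-analytic route. Equip $\mathring{H}^{1}(\Omega;\mathbb{V})$ with the norm $\|\nabla\cdot\|_{L^{2}}$, equivalent to the full $H^{1}$ norm by Poincar\'e. Then $\text{div}:\mathring{H}^{1}(\Omega;\mathbb{V})\to L^{2}_{0}(\Omega):=\{g\in L^{2}(\Omega):\int_{\Omega}g\,d\boldsymbol{x}=0\}$ is bounded, and integration by parts (using the vanishing trace) identifies its Banach-space adjoint with $-\nabla:L^{2}_{0}(\Omega)\to H^{-1}(\Omega;\mathbb{V})$. By the closed-range theorem, $\text{div}$ is onto $L^{2}_{0}(\Omega)$ as soon as this adjoint is bounded below, i.e.
\[
\|q\|_{L^{2}(\Omega)}\le C\,\|\nabla q\|_{H^{-1}(\Omega;\mathbb{V})},\qquad q\in L^{2}_{0}(\Omega).
\]
Granting this inequality, $\text{div}$ has closed range and trivial cokernel (its cokernel is the space of $L^{2}_{0}$ functions with vanishing gradient, hence $\{0\}$), so for each $g\in L^{2}_{0}(\Omega)$ there is $\tilde{\eta}\in\mathring{H}^{1}(\Omega;\mathbb{V})$ with $\text{div}\,\tilde{\eta}=g$, and combining with the first paragraph finishes the proof.

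The main obstacle is precisely the displayed inequality — the Ne\v{c}as (Lions) inequality — which is the genuine analytic content of the lemma and the place where the Lipschitz regularity of $\partial\Omega$ is used. I would either cite it directly from Girault--Raviart \cite{GR:1986:FENS} (or Ne\v{c}as), or, if a self-contained argument is desired, establish the equivalent solvability statement via the Bogovskii construction: on a domain star-shaped with respect to a ball one has an explicit singular-integral right inverse of $\text{div}$ on mean-zero data, bounded in $H^{1}$ by Calder\'on--Zygmund theory; a general bounded Lipschitz $\Omega$ is then covered by finitely many such star-shaped pieces, and a subordinate partition of unity — with the masses of the localized data redistributed so that each piece has zero mean — reduces the problem to the star-shaped case, after which one sums the local solutions. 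Either way, the reduction of the first paragraph then delivers both assertions of the lemma.
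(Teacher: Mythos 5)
The paper does not actually prove this lemma: it is stated as a ``well-known result from partial differential equations'' with a bare citation to Girault--Raviart \cite{GR:1986:FENS}. Your argument is a correct, essentially complete rendition of the standard proof that lives behind that citation. The reduction is sound: with $c=\frac{1}{3|\Omega|}\int_{\Omega}\mu$ and $\eta_{0}=c\,\boldsymbol{x}$ one has $\operatorname{div}\eta_{0}=3c$ and $\mu-\operatorname{div}\eta_{0}\in L^{2}_{0}(\Omega)$, and the case $\int_{\Omega}\mu=0$ collapses to $\eta_{0}=0$, so both assertions follow once $\operatorname{div}:\mathring{H}^{1}(\Omega;\mathbb{V})\to L^{2}_{0}(\Omega)$ is shown to be onto. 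Your closed-range argument for that surjectivity is the classical one, and you correctly isolate the Ne\v{c}as inequality as the genuine analytic content (note that the form $\Vert q\Vert_{L^{2}}\le C\Vert\nabla q\Vert_{H^{-1}}$ on $L^{2}_{0}$ is obtained from the additive form $\Vert q\Vert_{L^{2}}\le C(\Vert q\Vert_{H^{-1}}+\Vert\nabla q\Vert_{H^{-1}})$ by a compactness/contradiction step, and uses connectedness of $\Omega$ --- harmless here since ``domain'' means connected, but worth stating, as the mean-zero hypothesis over all of $\Omega$ would not suffice on a disconnected set). The Bogovskii alternative you sketch is equally standard. In short: no gap, but also no divergence from the paper, which simply outsources the whole lemma to \cite{GR:1986:FENS}.
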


\begin{remark}
The domain $\Omega$ need {\em not}  be contractible.
\end{remark}

\bigskip 
The main result of this paper is the following theorem. In the proof we follow
the lines of proof of 
Theorem 9.1 in \cite{Falk:2008:FME}, Theorem 7.1 in
\cite{AFW:2007:MMEW} and Theorem 11.4 in \cite{AFW:2006:ECH}. The main
difference is in the use of our operator $\overline{\Pi}_{\tilde{r},h}^{1,-}$ in place of the 
operator $\tilde{\Pi}_{h}^{n-2}$ from \cite{Falk:2008:FME}.

\begin{theorem}
\label{stable_condition_theorem}Let $\Omega$ be a bounded polyhedral domain in
$\mathbb{R}^{3}$ with a Lipschitz boundary. We assume that the meshes are regular.
Then for any $(\omega,\mu)\in 
\mathcal{P}_{\tilde{r}}\Lambda^{3}(\mathcal{T}_{h};\mathbb{V})\times 
\mathcal{P}_{\tilde{r}}\Lambda^{3}(\mathcal{T}_{h};\mathbb{V})$, there exists 
$\sigma\in\mathcal{P}_{\tilde{r}+1}\Lambda^{2}(\mathcal{T}_{h};\mathbb{V})$ 
such that $\text{div}\sigma =\mu$, $-\Pi_{\tilde{r},\mathcal{T}_{\tilde{r},h}}^{3}
S_{2}\sigma=\omega$. And we have
\begin{equation}
\Vert\sigma\Vert _{H(\text{div},\Omega;\mathbb{M})}\leq c(\Vert
\omega\Vert_{L^{2}(\Omega;\mathbb{V})}+\Vert\mu\Vert_{L^{2}(\Omega;\mathbb{V})}), 
\label{stable_result}
\end{equation}
where the constant $c$ is independent of $\omega,\mu$ and h, but it may depend upon
$\max_{T\in\triangle_{3}(\mathcal{T}_{h})}\tilde{r}(T)$.
\end{theorem}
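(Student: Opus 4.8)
The plan is to prove the surjectivity statement (\ref{stable_result}) by an explicit construction that splits the problem into a "divergence part'' and a "symmetry/rotation part,'' exactly mirroring the classical AFW argument but substituting our new operators. First I would use Lemma~\ref{PDE_lemma} to produce a vector field: given $\mu\in\mathcal{P}_{\tilde{r}}\Lambda^{3}(\mathcal{T}_{h};\mathbb{V})\subset L^{2}(\Omega;\mathbb{V})$, construct $\eta\in H^{1}(\Omega;\mathbb{M})$ row-wise with $\text{div}\,\eta=\mu$ and $\Vert\eta\Vert_{H^{1}(\Omega;\mathbb{M})}\le c\Vert\mu\Vert_{L^{2}(\Omega;\mathbb{V})}$. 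Setting $\sigma_{0}:=\Pi_{\tilde{r},h}^{2}\eta\in\mathcal{P}_{\tilde{r}+1}\Lambda^{2}(\mathcal{T}_{h};\mathbb{V})$, the commuting property (\ref{cd1}) gives $\text{div}\,\sigma_{0}=\Pi_{\tilde{r},h}^{3}\text{div}\,\eta=\Pi_{\tilde{r},h}^{3}\mu=\mu$, and (\ref{cd1bound}) bounds $\Vert\sigma_{0}\Vert$ by $\Vert\mu\Vert_{L^{2}(\Omega;\mathbb{V})}$. The residual to be fixed is the rotation defect $\omega_{0}:=\omega+\Pi_{\tilde{r},h}^{3}S_{2}\sigma_{0}\in\mathcal{P}_{\tilde{r}}\Lambda^{3}(\mathcal{T}_{h};\mathbb{V})$, with $\Vert\omega_{0}\Vert\le c(\Vert\omega\Vert+\Vert\mu\Vert)$; we must now add a correction $\sigma_{1}\in\mathcal{P}_{\tilde{r}+1}\Lambda^{2}(\mathcal{T}_{h};\mathbb{V})$ that is divergence-free (so as not to spoil $\text{div}\,\sigma=\mu$) and satisfies $-\Pi_{\tilde{r},h}^{3}S_{2}\sigma_{1}=\omega_{0}$.

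The correction is produced from the $S_{1}$--$S_{2}$ commuting identity of Lemma~\ref{S1_S2_commuting_diagram}, namely $\text{div}\,S_{1}W+S_{2}\,\text{curl}\,W=0$. So next I would solve, again via Lemma~\ref{PDE_lemma} applied row-wise, for $\rho\in H^{1}(\Omega;\mathbb{M})$ with $\text{div}\,\rho=-\omega_{0}$ and $\Vert\rho\Vert_{H^{1}(\Omega;\mathbb{M})}\le c\Vert\omega_{0}\Vert_{L^{2}(\Omega;\mathbb{V})}$ (taking $\rho$ in $\mathring{H}^{1}$ on each connected component when the mean of $\omega_{0}$ vanishes; the general case is handled by a finite-dimensional correction, which is harmless for the estimate). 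Apply $S_{1}^{-1}$ (invertible by Lemma~\ref{S1_invertible}), set $W:=S_{1}^{-1}\rho\in H^{1}(\Omega;\mathbb{M})$ so that $S_{1}W=\rho$, and define $\sigma_{1}:=\text{curl}\,\overline{\Pi}_{\tilde{r},h}^{1,-}W$. Because $\text{curl}$ maps $\mathcal{P}_{\tilde{r}+2}^{-}\Lambda^{1}(\mathcal{T}_{h};\mathbb{V})$ into $\mathcal{P}_{\tilde{r}+1}^{-}\Lambda^{2}(\mathcal{T}_{h};\mathbb{V})\subset\mathcal{P}_{\tilde{r}+1}\Lambda^{2}(\mathcal{T}_{h};\mathbb{V})$, $\sigma_{1}$ lives in the right space; it is automatically divergence-free as a curl; and (\ref{op1_bound_global_overline}) gives $\Vert\sigma_{1}\Vert_{L^{2}(\Omega;\mathbb{M})}=\Vert\text{curl}\,\overline{\Pi}_{\tilde{r},h}^{1,-}W\Vert\le c\Vert W\Vert_{H^{1}(\Omega;\mathbb{M})}\le c\Vert\rho\Vert_{H^{1}(\Omega;\mathbb{M})}\le c\Vert\omega_{0}\Vert$, using boundedness of $S_{1}^{-1}$. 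Finally set $\sigma:=\sigma_{0}+\sigma_{1}$; then $\text{div}\,\sigma=\mu$, and $\Vert\sigma\Vert_{H(\text{div},\Omega;\mathbb{M})}^{2}=\Vert\sigma\Vert_{L^{2}}^{2}+\Vert\mu\Vert_{L^{2}}^{2}\le c(\Vert\omega\Vert+\Vert\mu\Vert)^{2}$, which is (\ref{stable_result}).

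The step that actually needs the new operators — and the one I expect to be the crux — is verifying the identity $-\Pi_{\tilde{r},h}^{3}S_{2}\sigma_{1}=\omega_{0}$. The computation runs as follows: $S_{2}\sigma_{1}=S_{2}\,\text{curl}\,\overline{\Pi}_{\tilde{r},h}^{1,-}W=-\text{div}\,S_{1}\overline{\Pi}_{\tilde{r},h}^{1,-}W$ by Lemma~\ref{S1_S2_commuting_diagram} applied to $\overline{\Pi}_{\tilde{r},h}^{1,-}W$ (legitimate because $\overline{\Pi}_{\tilde{r},h}^{1,-}W$ is piecewise polynomial and $S_{1}\overline{\Pi}_{\tilde{r},h}^{1,-}W\in\mathcal{P}_{\tilde{r}+1}^{-}\Lambda^{2}(\mathcal{T}_{h};\mathbb{V})\subset H(\text{div},\Omega;\mathbb{M})$, so the identity holds globally). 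Then I must show $\Pi_{\tilde{r},h}^{3}\,\text{div}\,S_{1}\overline{\Pi}_{\tilde{r},h}^{1,-}W=\Pi_{\tilde{r},h}^{3}\,\text{div}\,S_{1}W$: this is where I invoke the commuting diagram (\ref{commuting_diagram3}) / identity (\ref{cd_S1_global_overline}), $\Pi_{\tilde{r},h}^{2,-}S_{1}\overline{\Pi}_{\tilde{r},h}^{1,-}W=\Pi_{\tilde{r},h}^{2,-}S_{1}W$, together with the "div of $\Pi^{2,-}$'' commuting property encoded in (\ref{cd_div_S1_global})'s first half and (\ref{commuting_diagram2}), i.e. $\Pi_{\tilde{r},h}^{3}\,\text{div}\,\Pi_{\tilde{r},h}^{2,-}V=\Pi_{\tilde{r},h}^{3}\,\text{div}\,V$ for $V\in H^{1}(\Omega;\mathbb{M})$ — applied with $V=S_{1}\overline{\Pi}_{\tilde{r},h}^{1,-}W$ and $V=S_{1}W$. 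Chaining these: $\Pi_{\tilde{r},h}^{3}\,\text{div}\,S_{1}\overline{\Pi}_{\tilde{r},h}^{1,-}W=\Pi_{\tilde{r},h}^{3}\,\text{div}\,\Pi_{\tilde{r},h}^{2,-}S_{1}\overline{\Pi}_{\tilde{r},h}^{1,-}W=\Pi_{\tilde{r},h}^{3}\,\text{div}\,\Pi_{\tilde{r},h}^{2,-}S_{1}W=\Pi_{\tilde{r},h}^{3}\,\text{div}\,S_{1}W=-\Pi_{\tilde{r},h}^{3}\omega_{0}=-\omega_{0}$. Hence $-\Pi_{\tilde{r},h}^{3}S_{2}\sigma_{1}=\Pi_{\tilde{r},h}^{3}\,\text{div}\,S_{1}\overline{\Pi}_{\tilde{r},h}^{1,-}W=-\omega_{0}$, and since $\text{div}\,\sigma_{1}=0$ gives $\Pi_{\tilde{r},h}^{3}S_{2}\sigma_{0}=\omega_{0}-\omega$ by definition of $\omega_{0}$, we get $-\Pi_{\tilde{r},h}^{3}S_{2}\sigma=-\Pi_{\tilde{r},h}^{3}S_{2}\sigma_{0}-\Pi_{\tilde{r},h}^{3}S_{2}\sigma_{1}=-(\omega_{0}-\omega)+\omega_{0}=\omega$, as required. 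The main obstacle is precisely bookkeeping the chain of commuting relations correctly — making sure each application of (\ref{commuting_diagram2}) and (\ref{commuting_diagram3}) is to an argument in $H^{1}(\Omega;\mathbb{M})$, that integration by parts in Lemma~\ref{S1_S2_commuting_diagram} produces no interelement jump terms (guaranteed because the relevant images sit in $H(\text{div})$), and tracking all the constants so the final bound depends only on $\max_{T}\tilde{r}(T)$ and not on $h$.
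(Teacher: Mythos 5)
Your construction is essentially identical to the paper's own proof: the paper likewise solves $\operatorname{div}\eta=\mu$, sets the divergence part to $\Pi_{\tilde{r},h}^{2}\eta$, solves a second divergence problem for the rotation defect $\omega+\Pi_{\tilde{r},h}^{3}S_{2}\Pi_{\tilde{r},h}^{2}\eta$, pulls back through $S_{1}^{-1}$, and corrects with $\operatorname{curl}\overline{\Pi}_{\tilde{r},h}^{1,-}$ using exactly the chain Lemma~\ref{S1_S2_commuting_diagram} $\to$ (\ref{cd_S1_global_overline}) $\to$ (\ref{cd_div_S1_global}); your norm bounds in the last step also match the paper's step (7).

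There is, however, one sign slip that makes your final verification inconsistent as written. You declare the target $-\Pi_{\tilde{r},h}^{3}S_{2}\sigma_{1}=\omega_{0}$ but choose $\operatorname{div}\rho=-\omega_{0}$; your own chain then correctly yields
\begin{equation*}
-\Pi_{\tilde{r},h}^{3}S_{2}\sigma_{1}=\Pi_{\tilde{r},h}^{3}\operatorname{div}S_{1}\overline{\Pi}_{\tilde{r},h}^{1,-}W=\Pi_{\tilde{r},h}^{3}\operatorname{div}\rho=-\omega_{0},
\end{equation*}
yet in the final assembly you substitute $-\Pi_{\tilde{r},h}^{3}S_{2}\sigma_{1}=+\omega_{0}$; with the value you actually derived, the computation gives $-\Pi_{\tilde{r},h}^{3}S_{2}\sigma=\omega-2\omega_{0}$, not $\omega$. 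The fix is simply to solve $\operatorname{div}\rho=+\omega_{0}$ (as the paper does with its $\tau$); everything else, including the estimates, then goes through unchanged. A minor additional remark: your parenthetical worry about mean-zero data in Lemma~\ref{PDE_lemma} is unnecessary, since that lemma provides an $H^{1}$ solution for arbitrary $L^{2}$ data and the zero-boundary-value refinement is never needed here.
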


\begin{proof}
We want to show that Brezzi stability conditions (\ref{S1_condition}
),(\ref{S2_condition}) are satisfied. The condition (\ref{S1_condition}) is
obviously satisfied since, by construction, $\text{div}\mathcal{P}_{\tilde{r}+1}\Lambda^{2}
(\Omega;\mathbb{V})\subset\mathcal{P}_{\tilde{r}}\Lambda^{3}(\Omega;\mathbb{V})$ and the
fact that $A$ is coercive.

Now we only need to prove that the condition (\ref{S2_condition}) is satisfied as well.

(1) By Lemma \ref{PDE_lemma}, we can find $\eta\in H^{1}(
\Omega;\mathbb{M})$ with $\text{div}\eta=\mu$ and $\Vert\eta\Vert
_{H^{1}(\Omega;\mathbb{M})}\leq c\Vert\mu\Vert_{L^{2}(\Omega;\mathbb{V})}$.

(2) Since $\omega+\Pi_{\tilde{r},h}^{3}S_{2}\Pi_{\tilde{r},h}^{2}\eta\in L^{2}
(\Omega;\mathbb{V})$, we can apply Lemma \ref{PDE_lemma}
again to find $\tau\in H^{1}(\Omega;\mathbb{M})$
with $\text{div}\tau=\omega+\Pi_{\tilde{r},h}^{3}S_{2}\Pi_{\tilde{r},h}^{2}\eta$ and
\[
\Vert\tau\Vert _{H^{1}(\Omega;\mathbb{M})}\leq c(\Vert\omega\Vert_{L^{2}(\Omega;\mathbb{V})}
+\Vert\Pi_{\tilde{r},h}^{3}S_{2}\Pi_{\tilde{r},h}^{2}\eta\Vert_{L^{2}(\Omega;\mathbb{V})})  .
\]

(3) Since $S_{1}$ is an isomorphism from $H^{1}(\Omega;\mathbb{M})$ to 
$H^{1}(\Omega;\mathbb{M})$, we have $\varrho\in H^{1}(\Omega;\mathbb{M})$ 
with $S_{1}\varrho=\tau$, and $\Vert\varrho\Vert _{H^{1}(\Omega;\mathbb{M})}\leq 
c\Vert\tau\Vert_{H^{1}(\Omega;\mathbb{M})}$.

(4) Define $\sigma=\text{curl}\overline{\Pi}_{\tilde{r},h}^{1,-}\varrho+\Pi_{\tilde{r},h}^{2}
\eta\in\mathcal{P}_{\tilde{r}+1}\Lambda^{2}(\mathcal{T}_{h};\mathbb{V})$. According to 
Lemma $14$ in \cite{QD:2009:MMEW}, $\text{curl}\overline{\Pi}_{\tilde{r},h}^{1,-}\varrho
\in\mathcal{P}_{\tilde{r}+1}\Lambda^{2}(\mathcal{T}_{h};\mathbb{V})$. So we have 
$\sigma\in\mathcal{P}_{\tilde{r}+1}\Lambda^{2}(\mathcal{T}_{h};\mathbb{V})$.

(5) From step (4), (\ref{cd1}), step (1), and the fact that
$\Pi_{\tilde{r},h}^{3}$ is a projection, we have
\[
\text{div}\sigma=\text{div}\Pi_{\tilde{r},h}^{2}\eta=\Pi_{\tilde{r},h}^{3}\text{div}\eta=
\Pi_{\tilde{r},h}^{3}\mu=\mu.
\]

(6) Also from step (4),
\[
-\Pi_{\tilde{r},h}^{3}S_{2}\sigma=-\Pi_{\tilde{r},h}^{3}S_{2}\text{curl}\overline{\Pi}_{\tilde{r},h}^{1,-}
\varrho-\Pi_{\tilde{r},h}^{3}S_{2}\Pi_{\tilde{r},h}^{2}\eta.
\]
Applying, in order, Lemma \ref{S1_S2_commuting_diagram}, (\ref{cd_div_S1_global}), 
(\ref{cd_S1_global_overline}), step (3), (\ref{cd_div_S1_global}), step (2), and 
the fact that $\Pi_{\tilde{r},h}^{3}$ is a projection, we obtain
\begin{align*}
-\Pi_{\tilde{r},h}^{3}S_{2}\text{curl}\overline{\Pi}_{\tilde{r},h}^{1,-}\varrho &  
= \Pi_{\tilde{r},h}^{3}\text{div}S_{1}\overline{\Pi}_{\tilde{r},h}^{1,-}\varrho
=\Pi_{\tilde{r},h}^{3}\text{div}\Pi_{\tilde{r},h}^{2,-}
S_{1}\overline{\Pi}_{\tilde{r},h}^{1,-}\varrho\\
&  =\Pi_{\tilde{r},h}^{3}\text{div}\Pi_{\tilde{r},h}^{2,-}S_{1}\varrho=\Pi_{\tilde{r},h}^{3}
\text{div}\Pi_{\tilde{r},h}^{2,-}\tau=\Pi_{\tilde{r},h}^{3}\text{div}\tau\\
&  =\Pi_{\tilde{r},h}^{3}(\omega+\Pi_{\tilde{r},h}^{3}S_{2}\Pi_{\tilde{r},h}^{2}
\eta)  =\omega+\Pi_{\tilde{r},h}^{3}S_{2}\Pi_{\tilde{r},h}^{2}\eta.
\end{align*}
Combining, we have $-\Pi_{\tilde{r},h}^{3}S_{2}\sigma=\omega$.

(7) Finally, we prove the norm bound. From the boundedness of $S_{2}$ in
$L^{2}$, (\ref{cd1bound}), and step (1),
\[
\Vert\Pi_{\tilde{r},h}^{3}S_{2}\Pi_{\tilde{r},h}^{2}\eta\Vert_{L^{2}(\Omega;\mathbb{V})}\leq
c\Vert S_{2}\Pi_{\tilde{r},h}^{2}\eta\Vert_{L^{2}(\Omega;\mathbb{V})}\leq c\Vert
\Pi_{\tilde{r},h}^{2}\eta\Vert_{L^{2}(\Omega;\mathbb{M})}\leq c\Vert\eta\Vert _{H^{1}
(\Omega;\mathbb{M})}\leq c\Vert\mu\Vert_{L^{2}(\Omega;\mathbb{V})} .
\]
Combining with the bounds in step (3) and (2), this gives $\Vert\varrho\Vert_{H^{1}
(\Omega;\mathbb{M})}\leq c(\Vert\omega\Vert_{L^{2}(\Omega;\mathbb{V})}+\Vert
\mu\Vert_{L^{2}(\Omega;\mathbb{V})})$. From (\ref{cd_S1_global_overline}), we then have
$\Vert\text{curl}\overline{\Pi}_{\tilde{r},h}^{1,-}\varrho\Vert_{L^{2}(\Omega;\mathbb{V})}
\leq c\Vert\varrho
\Vert_{H^{1}(\Omega;\mathbb{M})}\leq c(\Vert\omega\Vert_{L^{2}(\Omega;\mathbb{V})}
+\Vert\mu\Vert_{L^{2}(\Omega;\mathbb{V})})$. From (\ref{cd1bound}) and the bound in Step
(1), $\Vert\Pi_{\tilde{r},h}^{2}\eta\Vert_{L^{2}(\Omega;\mathbb{M})}\leq c\Vert
\eta\Vert_{H^{1}(\Omega;\mathbb{M})}\leq c\Vert\mu\Vert_{L^{2}(\Omega;\mathbb{V})}$. In 
view of the definition of $\sigma$, these two last bounds imply that $\Vert
\sigma\Vert_{L^{2}(\Omega;\mathbb{M})}\leq c(\Vert\omega\Vert_{L^{2}(\Omega;\mathbb{V})}
+\Vert\mu\Vert_{L^{2}(\Omega;\mathbb{V})})$, while $\Vert\text{div}\sigma\Vert_{L^{2}
(\Omega;\mathbb{V})} =\Vert\mu\Vert_{L^{2}(\Omega;\mathbb{V})}$ by Step (5), and thus we 
have the desired bound (\ref{stable_result}).
\end{proof}

\bigskip
We have thus verified the stability conditions (\ref{S1_condition}) and
(\ref{S2_condition}), and so obtain the following quasi-optimal error estimate.

\begin{theorem}
Suppose $(\sigma,u,p)$ is the solution of the elasticity system
(\ref{weak_symmetry_S2}) and $(\sigma_{h},u_{h}
,p_{h})$ is the solution of discrete system
(\ref{weak_symmetry_discrete}), where the finite element
spaces satisfy the hypotheses of Theorem \ref{stable_condition_theorem}. 
We also assume that there is $r_{\max}\in\mathbb{N}$ such that for 
any $h>0$ and $f\in\Delta (\mathcal{T}_{h})$,$\tilde{r}(f)\leq r_{\max}$.
Then there is a constant $C$, independent of $h$, such that
\begin{align*}
& \Vert\sigma-\sigma_{h}\Vert_{H(\text{div},\Omega;\mathbb{M})}+\Vert u-u_{h}
\Vert_{L^{2}(\Omega;\mathbb{V})}+\Vert p-p_{h}\Vert_{L^{2}(\Omega;\mathbb{V})}\\
\leq & C\inf(\Vert\sigma-\tau\Vert_{H(\text{div},\Omega;\mathbb{M})}+\Vert u-v\Vert
_{L^{2}(\Omega;\mathbb{V})}+\Vert p-q\Vert_{L^{2}(\Omega;\mathbb{V})}),
\end{align*}
where the infimum is taken over all $\tau\in\mathcal{P}_{\tilde{r}+1}^{2}
(\mathcal{T}_{h};\mathbb{V}),v\in\mathcal{P}_{\tilde{r}}^{3}(\mathcal{T}_{h};\mathbb{V})$, 
and $q\in\mathcal{P}_{\tilde{r}}^{3}(\mathcal{T}_{h};\mathbb{V})$.
\end{theorem}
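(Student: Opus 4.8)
The plan is to observe that this estimate is the standard quasi-optimality (Céa-type) bound for a \emph{conforming} mixed finite element discretization of a saddle-point problem, and to deduce it directly from the abstract Babuška--Brezzi theory once the discrete stability conditions (\ref{S1_condition}) and (\ref{S2_condition}) have been secured --- which is exactly what Theorem~\ref{stable_condition_theorem} (together with the trivial verification of (S1)) accomplishes. Concretely, I would first set up the abstract framework: put $a(\sigma,\tau):=\langle A\sigma,\tau\rangle$ and $b(\tau,(v,q)):=\langle\text{div}\,\tau,v\rangle-\langle S_{2}\tau,q\rangle$, so that (\ref{weak_symmetry_S2}) and (\ref{weak_symmetry_discrete}) read: find $(\sigma,(u,p))$ with $a(\sigma,\tau)+b(\tau,(u,p))=0$ for all $\tau$ and $b(\sigma,(v,q))=\langle f,v\rangle$ for all $(v,q)$. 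Both $a$ and $b$ are bounded, $a$ on $H(\text{div},\Omega;\mathbb{M})$ and $b$ on $H(\text{div},\Omega;\mathbb{M})\times\bigl(L^{2}(\Omega;\mathbb{V})\times L^{2}(\Omega;\mathbb{V})\bigr)$, the boundedness of $b$ being immediate from Cauchy--Schwarz and the fact that $S_{2}$ is a bounded algebraic operator. Since $\Lambda_{h}^{2}(\mathbb{M})=\mathcal{P}_{\tilde{r}+1}\Lambda^{2}(\mathcal{T}_{h};\mathbb{V})\subset H(\text{div},\Omega;\mathbb{M})$ and $\Lambda_{h}^{3}(\mathbb{V})=\overline{\Lambda}_{h}^{3}(\mathbb{V})=\mathcal{P}_{\tilde{r}}\Lambda^{3}(\mathcal{T}_{h};\mathbb{V})\subset L^{2}(\Omega;\mathbb{V})$, the method is conforming and Galerkin orthogonality holds.

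Next I would record the two uniform stability properties. Condition (S1) is, as already noted, essentially automatic: on the discrete kernel $\{\tau\in\Lambda_{h}^{2}(\mathbb{M}):\text{div}\,\tau=0\}$ one has $\|\tau\|_{H(\text{div},\Omega;\mathbb{M})}^{2}=\|\tau\|_{L^{2}(\Omega;\mathbb{M})}^{2}\le c_{1}\langle A\tau,\tau\rangle$ by the uniform positive definiteness of $A$, where one uses $\text{div}\,\mathcal{P}_{\tilde{r}+1}\Lambda^{2}(\mathcal{T}_{h};\mathbb{V})\subset\mathcal{P}_{\tilde{r}}\Lambda^{3}(\mathcal{T}_{h};\mathbb{V})$ to identify the kernel. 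Condition (S2) is precisely the statement of Theorem~\ref{stable_condition_theorem}: for nonzero $(v,q)$ in the product space, take $\sigma\in\mathcal{P}_{\tilde{r}+1}\Lambda^{2}(\mathcal{T}_{h};\mathbb{V})$ with $\text{div}\,\sigma=v$, $-\Pi_{\tilde{r},h}^{3}S_{2}\sigma=q$ and $\|\sigma\|_{H(\text{div},\Omega;\mathbb{M})}\le c(\|v\|+\|q\|)$; since $v,q$ are discrete, $b(\sigma,(v,q))=\langle\text{div}\,\sigma,v\rangle-\langle\Pi_{\tilde{r},h}^{3}S_{2}\sigma,q\rangle=\|v\|^{2}+\|q\|^{2}\ge c_{2}\|\sigma\|_{H(\text{div},\Omega;\mathbb{M})}(\|v\|+\|q\|)$. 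The extra hypothesis $\tilde{r}(f)\le r_{\max}$ is exactly what guarantees that $c_{2}$, and hence the final constant, does not deteriorate as $h\to 0$.

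Finally, I would invoke the classical abstract error estimate for mixed methods under (S1)--(S2), conformity, and boundedness of the forms (see, e.g., \cite{GR:1986:FENS} or the exposition in \cite{AFW:2006:ECH}): the discrete problem (\ref{weak_symmetry_discrete}) then has a unique solution, and
\[
\|\sigma-\sigma_{h}\|_{H(\text{div},\Omega;\mathbb{M})}+\|u-u_{h}\|_{L^{2}(\Omega;\mathbb{V})}+\|p-p_{h}\|_{L^{2}(\Omega;\mathbb{V})}\le C\inf\bigl(\|\sigma-\tau\|_{H(\text{div},\Omega;\mathbb{M})}+\|u-v\|_{L^{2}(\Omega;\mathbb{V})}+\|p-q\|_{L^{2}(\Omega;\mathbb{V})}\bigr),
\]
the infimum being over $\tau\in\mathcal{P}_{\tilde{r}+1}\Lambda^{2}(\mathcal{T}_{h};\mathbb{V})$ and $v,q\in\mathcal{P}_{\tilde{r}}\Lambda^{3}(\mathcal{T}_{h};\mathbb{V})$, with $C$ depending only on $c_{1},c_{2}$ and the norms of $A$, $A^{-1}$, $S_{2}$, hence independent of $h$ (though possibly dependent on $r_{\max}$). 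I do not anticipate any genuine obstacle in this last step: the substantive work --- the $h$-independent discrete inf-sup bound --- is already done in Theorem~\ref{stable_condition_theorem}, and what remains is a routine citation of standard saddle-point theory, with the only point meriting a line of care being the bookkeeping that shows the uniformity of $c_{1},c_{2}$ transfers to $C$.
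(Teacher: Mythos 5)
Your proposal is correct and matches the paper's approach: the paper gives no explicit proof of this theorem, simply asserting that it follows from the verified Brezzi conditions (\ref{S1_condition})--(\ref{S2_condition}) via standard saddle-point theory, which is precisely the argument you spell out. The only detail you fill in beyond the paper — the reduction of (S2) to Theorem \ref{stable_condition_theorem} and the elementary inequality $\Vert v\Vert^{2}+\Vert q\Vert^{2}\geq\frac{1}{2c}\Vert\sigma\Vert_{H(\text{div},\Omega;\mathbb{M})}(\Vert v\Vert+\Vert q\Vert)$ — is sound.
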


\section{Conclusions and future work}

In the paper, we have presented a generalization of Arnold-Falk-Winther (AFW) elements
to the case of elements of variable order for a three dimensional domain. The proof of 
stability is based on the use of some variant of projection based interpolation operators, 
and a specially designed operator $\overline{\Pi}_{\tilde{r},h}^{1,-}$ discussed in the text. 
We have proved the $h$-stability for meshes with variable order under the assumption 
that there is an uniform upper bound on the highest polynomial order used. 

We plan to continue the research on several fronts. On the numerical side,
we intend to implement and test the $hp$-adaptive algorithm based on the
coarse/fine grid paradigm. The code will be applied to a detailed study
of problems with large material contrast including the streamer problem, 
discussed in \cite{QD:2009:MMEW}. The results obtained using the 
AFW elements will be compared with results obtained using the classical 
$H^1$-conforming elements, in terms of memory use and CPU time. 

On the theoretical side, we will attempt to prove $p$-stability and, ultimately, 
the $hp$-stability of the method.

\end{document}